\documentclass[11pt,reqno]{amsart}
\usepackage[margin= 2.0cm]{geometry}
\usepackage{algorithm}
\usepackage{algpseudocode}
\usepackage{tikz}

\usepackage[textsize=small]{todonotes}
\usepackage{amsmath}
\usepackage{amsfonts}
\usepackage{amssymb}
\usepackage{amsthm}
\usepackage{empheq}    

\usepackage{mathtools} 
\usepackage{latexsym}

\usepackage{enumerate}
\usepackage{cancel}
\usepackage{ragged2e}
\usepackage{blindtext}
\usepackage{cases}
\usepackage{empheq}
\usepackage{multicol}
\usepackage{placeins} 

\usepackage{wrapfig}
\usepackage{caption}

\usepackage{subfigure}
\usepackage{float}

\usepackage{color}

\usepackage{mathrsfs}
\usepackage{fontenc} 
\usepackage{inputenc} 
\usepackage{enumitem}
\usepackage{verbatim}

\theoremstyle{plain}
\newtheorem{theorem}{Theorem}[section]

\newtheorem{lemma}[theorem]{Lemma}

\theoremstyle{definition}
\newtheorem{definition}[theorem]{Definition}

\theoremstyle{remark}
\newtheorem{remark}[theorem]{Remark}
\usepackage[square,comma,numbers,sort]{natbib}
\usepackage[colorlinks=true, pdfborder={ 0 0 0}]{hyperref}
\hypersetup{urlcolor=blue, citecolor=red}
\usepackage{url}
\makeatletter
\renewcommand{\section}{\@startsection{section}{1}{0pt}%
  {1.5ex plus .2ex minus .2ex}%
  {1.0ex plus .2ex}%
  {\normalfont\large\bfseries\raggedright}}
\renewcommand{\subsection}{\@startsection{subsection}{2}{0pt}%
  {1.25ex plus .2ex minus .2ex}%
  {1.0ex plus .2ex}%
  {\normalfont\normalsize\bfseries\raggedright}}
\makeatother

\allowdisplaybreaks
\makeatletter
\def\abstractname{Abstract}
\renewenvironment{abstract}{%
  \begin{center}
    \textbf{\abstractname}
  \end{center}
}{}
\makeatother

\usepackage{xcolor}
\begin{document}
\title[PINNs in FSI model]{Error estimates in Physics-Informed Neural Networks to heat-wave Fluid-Structure Interactive system}
\author[Geredeli]{Pelin G. Geredeli}
\address{School of Mathematical and Statistical Sciences, Clemson University, Clemson, SC 29634, USA}
\email{pgerede@clemson.edu}
\author[Rahman]{Mohammad Mahabubur Rahman}
\address{School of Mathematical and Statistical Sciences, Clemson University, Clemson, SC 29634, USA}
\email{rahman6@clemson.edu}
\author[Verma]{Deepanshu Verma}
\address{School of Mathematical and Statistical Sciences, Clemson University, Clemson, SC 29634, USA}
\email{dverma@clemson.edu}
\maketitle{}
\begin{abstract}
We study a physics-informed neural network (PINN) approximation of a coupled PDE system that models the interaction between fluid and elastic dynamics. The canonical heat-wave model considered here couples a parabolic and a hyperbolic equation through transmission conditions on a fixed boundary interface, and serves as a prototype for the Stokes-elastic systems that arises in real world applications. This coupling makes the error analysis more delicate than that for a single PDE. We bound the error between the exact and PINN solutions by the associated PINN residuals, and we moreover prove that the generalization error is controlled by the training error, the number of quadrature points, and the size of the network. {To the best of our knowledge}, these are the first error estimates for a PINN approximation of a coupled fluid-structure interaction PDE system. We also show that numerical experiments with respect to a given test problem.

\end{abstract}
\bigskip

\noindent\textbf{Keywords.}
Fluid-structure Interaction; PINNs;
Error Estimates; Scientific Machine Learning.
\medskip

\noindent\textbf{ Mathematics Subject Classification:}
35K05, 35L05, 74F10, 68T07, 65N12.

\bigskip

\section{Introduction}
The fluid structure interaction (FSI) PDE models which are composed of elastic dynamics coupled to heat, Stokes or Navier Stokes flows are known to describe a variety of physical phenomena, e.g., vibrations in elastic membranes, aerodynamics of structures such as bridges and tall buildings \cite{AT,AG2,Barbu,Bociu,Gunz,P8,P9,P10}. In particular, the mathematical analysis of  FSI problems in which the governing equations share the same dimensionality; i.e., 2D or 3D structure equation coupled to 2D or 3D fluid flow, and the boundary interface is particularly \textit{\underline{not}} smooth becomes quite challenging \cite{AG2, duc, zua1, zua2}.

Analytical results for various linear and nonlinear 2D (3D)-2D (3D) FSI models, such as those of well-posedness and longtime behavior of solutions, have been studied in various settings; see \cite{AT,AR,AG,AG2,Barbu,BC}. In particular, the semigroup wellposedness of the Stokes-Lame interaction coupled PDE dynamics  was studied in \cite{AT,AR}. Subsequently, in \cite{AG}, the authors proved that the solution to the PDE system considered in \cite{AR} satisfies higher regularity if the initial data is taken to be sufficiently smooth. The stability analysis of the solution to that Stokes-Lame PDE system was also studied in \cite{AR} and \cite{AG2} in the sense of strong and rational decay. 



The numerical approximation of coupled FSI systems has also been a research objective; see, e.g., \cite{bur,P8, P10, ku} 
and references therein. Although finite element methods have often been invoked for coupled PDE systems, they require meshes that are compatible across the fluid and structure subdomains, which is often difficult to ensure for complex interfaces. PINNs, which enforce the equations and the transmission conditions through a single mesh-free residual loss, have recently emerged as a widely used framework for approximating solutions of PDEs across a broad range of applications in science and engineering. While the 
foundational ideas for PINNs date back to the 1990s \cite{15a,40l,41p}, there 
has been a notable resurgence of interest in recent years 
\cite{RAM,RA,14p,KA21}. In the PINN approach, a neural network is 
``trained" by minimizing residuals of the governing equations at 
collocation points in the interior, on the boundary, and at the 
initial time, incorporating the underlying physical laws directly 
into the training process. For a comprehensive overview of recent 
developments, we refer the reader to \cite{14p,KA21} and 
references therein.

Deriving rigorous error estimates for PINNs has drawn considerable 
attention from the mathematical community, leading to substantial 
progress for a number of standard PDE settings 
\cite{DE,DE1,MS,MS1}. In \cite{MS1}, Mishra and Molinaro established 
an abstract framework in which the stability properties of the 
underlying PDE are leveraged to bound the generalization error of a 
trained PINN, in terms of the training error and the number of 
collocation points. Since then, this approach has been extended to 
several nonlinear PDEs including the Navier--Stokes and 
Kuramoto--Sivashinsky equations \cite{DE,DE1,rahman_verma}. However, despite this progress, the application of PINNs to \emph{coupled multi-physics} systems together with a rigorous mathematical justification remains limited; see \cite{F}, where PINNs were applied to a two-dimensional Navier-Stokes fluid flow interacting with an elastic disc and this numerical method was compared against a classical solver, with no error analysis. Error estimates for related settings exist for thermally coupled Navier--Stokes flows \cite{Goraya2023}, single parabolic interface problem \cite{kumar2025}, and domain decomposition settings \cite{hu2022}; but there is nothing in the literature for the treatment of a parabolic-hyperbolic system coupled through interface conditions. What these references lack are conditions which guarantee that a small training loss implies a small error in the coupled solution.

To the best of the authors' knowledge, no error estimates 
exist for PINNs which approximate coupled fluid-structure interaction 
PDE systems --even in the canonical case. Motivated by this gap, the present manuscript develops the first rigorous error analysis for PINNs applied to a  
coupled FSI system. We address the following key 
theoretical questions:

\begin{itemize}
\item[\textbf{[Q1]}]
Let $(w,w_t,u)$ denote the exact solution of the coupled system and let
$(w_\theta,w_{\theta t},u_\theta)$ be the corresponding PINN approximation,
where $\theta\in\Theta$ denotes the network parameters (weights and biases).
Given a tolerance $\varepsilon>0$, if the generalization error
$
E_G(\theta)<\varepsilon,
$
does there exist $\delta(\varepsilon)>0$, with
\[
\delta(\varepsilon)\to0
\qquad\text{as}\qquad
\varepsilon\to0,
\]
such that
\[
\left\|
(w,w_t,u)-(w_\theta,w_{\theta t},u_\theta)
\right\|
<
\delta(\varepsilon),
\]
where $\|\cdot\|$ denotes a suitable energy norm?
\item \textbf{[Q2]} Given a small training error $E_T(\theta^*)$ 
and a sufficiently large training set $S$, is the corresponding 
generalization error $E_G(\theta^*)$ also proportionately small?
\end{itemize}


\noindent For the present configuration, the solution is 
approximated by a neural network whose parameters are obtained by 
minimizing residual losses of the PDEs in the fluid and solid 
regions together with the interface and initial conditions 
evaluated at sampled collocation points, thereby yielding a mesh-free 
formulation. We answer the above questions \textbf{[Q1]} and 
\textbf{[Q2]} affirmatively through a rigorous error analysis. 
In particular, we present here a comprehensive treatment of 
PINNs applied to a coupled FSI model which describes the interactive dynamics of canonical heat and wave equations. Unlike single equation PDE problems, 
the heat-wave FSI system involves the interplay between a 
parabolic (heat) component and a hyperbolic (wave) component 
coupled at a (fixed) interface, which introduces distinctive challenges 
in both the energy analysis and treatment of interface 
conditions. In addition, we provide rigorous error bounds for the proposed 
PINN approximation algorithm, and numerical 
experiments in order to validate the accuracy of our result through comparisons with  solutions of a given test problem and the theoretical convergence bounds. The PINN analysis for the more complicated FSI model governed by 
Navier-Stokes fluid and elastic wave/plate equations will be 
treated in a forthcoming publication.
 
\textbf{Plan of the Paper.} Section 1 is devoted to the recent literature, and the introduction of the coupled FSI PDE system under consideration. In Section 2, we give some preliminaries and a functional setup which invokes linear semigroup theory. Also, we present some of the earlier theoretical results related to the wellposedness and higher regularity of the solutions to a coupled PDE system which is the non-canonical version of our coupled PDE dynamics. In Section 3, we present the PINNs formulation for the heat-wave FSI PDE system and derive the corresponding theoretical error bounds. Section 4 is devoted to numerical experiments that validate the given error estimates. In Section 5, we give Appendix.

\subsection{Description of the Coupled Heat-Wave Interaction PDE Problem}
In this paper, we study a canonical heat-structure interaction PDE model which serves as a foundational prototype for the coupled PDE systems which govern the Stokes fluid and elastic equations. The coupled system under consideration describes the interactive dynamics between the fluid (heat-conducting) and elastic structure components through the interaction interface. The FSI geometry is described as follows: Let $\Omega_f \subset \mathbb{R}^d$, with $d = 2$ or $3$, denote the bounded domain where the heat component evolves. Its boundary is partitioned as $\partial \Omega_f = \Gamma_s \cup \Gamma_f$, with $\Gamma_s \cap \Gamma_f = \emptyset$, and each portion is assumed to be sufficiently smooth. The structural domain $\Omega_s$, completely embedded within $\Omega_f$, represents the domain where the elastic component evolves over time. The coupling between the heat and structural dynamics occurs along the interface $\Gamma_s = \partial \Omega_s$ which is assumed to be fixed. 
\begin{figure}
    \centering
    \includegraphics[width=0.5\linewidth]{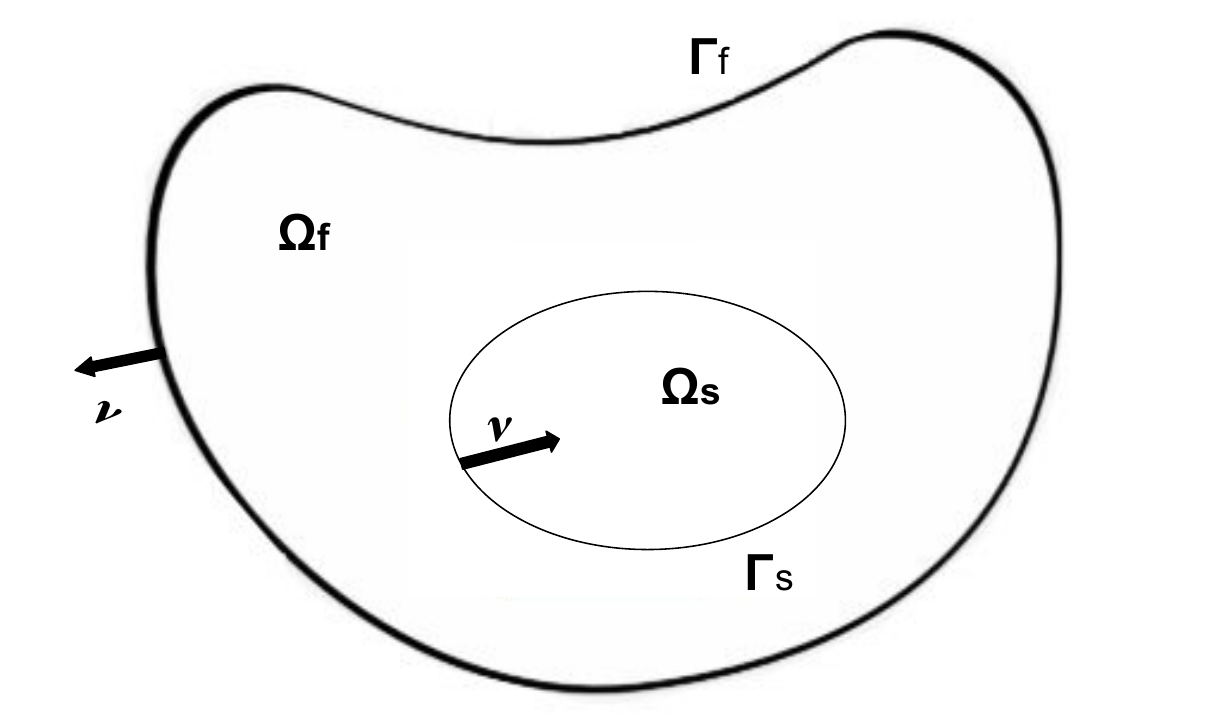}
    \caption{Geometry of the FSI Domain}
    \label{fig:geometry}
\end{figure}

\noindent As shown in Figure~\ref{fig:geometry}, $\nu(x)$ is the unit normal vector outward on  $\Gamma_f,$ and inward on $\Gamma_s.$ The coupled PDE system under consideration is given in solution variables 
$
u = [u_1(t,x), u_2(t,x), \dots, u_d(t,x)],
$
(fluid flow),
$
w = [w_1(t,x), w_2(t,x), \dots, w_d(t,x)],
$
(structure displacement), and
$
w_t = [w_{1t}(t,x), w_{2t}(t,x), \dots, w_{dt}(t,x)],
$
(structure velocity) as:

\begin{equation}
\label{1.0}
\left\{
\begin{aligned}
u_t - \Delta u &= f_1 &&\text{in } (0,T)\times\Omega_f, \\[2mm]
w_{tt} - \Delta w + w &= f_2 &&\text{in } (0,T)\times\Omega_s, \\[1mm]
u &= 0 &&\text{on } (0,T)\times\Gamma_f, \\[1mm]
u &= w_t &&\text{on } (0,T)\times\Gamma_s, \\[1mm]
\partial_\nu u &= \partial_\nu w &&\text{on } (0,T)\times\Gamma_s, \\[1mm]
[w(0),\, w_t(0),\, u(0)] &= [w_0,\, w_{01},\, u_0] &&\text{in } \mathbf{H},
\end{aligned}
\right.
\end{equation} 
\vspace{0.2cm}

\noindent where $f_1\in L^2(0,T;L^2(\Omega_f))$ and
$f_2\in L^2(0,T;L^2(\Omega_s))$ are the source functions. Here, the state space $\mathbf{H}$ is defined for the solution $(w(t), w_t(t), u(t))$ as
\begin{equation}\label{SEW}
\mathbf{H} 
\equiv
H^1(\Omega_s)\times L^2(\Omega_s)\times L^2(\Omega_f)
\end{equation}
with the inner product 
\begin{equation}\label{RTW}
\Big(
[w_0, w_{01},u],
[\tilde w_0,\tilde w_{01},\tilde u]
\Big)_{\mathbf{H}}
=
(w_0,\tilde w_0)_{H^1(\Omega_s)}
+
(w_{01},\tilde w_{01})_{L^2(\Omega_s)}
+
(u,\tilde u)_{L^2(\Omega_f)}
\end{equation}
where
\begin{equation}\label{WQA}
(w_0,\tilde w_0)_{H^1(\Omega_s)}
=
\int_{\Omega_s}
\big[\nabla w_0\cdot \nabla \tilde w_0 + w_0 \tilde w_0\big]\,
d\Omega_s.
\end{equation}

\section{Preliminaries}
\label{section2}
\vspace{0.3cm}

\noindent The PDE system \eqref{1.0} is a simplified version of the interactive dynamics of coupled 3D Stokes fluid and 3D Lame elastic structure PDE system in \cite{AR} where heat equation replaces the original Stokes equation. In \cite{AR}, a non-trivial semigroup formulation was used to show the existence-uniqueness of the solution. Since our PDE model \eqref{1.0} is the canonical case of the coupled system in \cite{AR}, a similar semigroup formulation can be used for PDE model \eqref{1.0} to show the existence-uniqueness of the solution. In our theretical error analysis, for the sake of simplicity, we will take the source functions $f_1=0$ and $f_2=0.$ Also, for clarity, we give details below concerning the semigroup formulation of our coupled PDE system \eqref{1.0}: it can be associated as an abstract ODE system in Hilbert space $\mathbf H$ (in \eqref{SEW}) as

\begin{equation}
\label{ode}
\left\{ 
\begin{array}{l}
\frac{d}{dt}\Phi (t)=\mathbf{A}\Phi (t)\\
\Phi (0)=\Phi _{0}
\end{array}%
\right. 
\end{equation}
where $\Phi (t)=\left(w(t),w_{t}(t),u(t)\right)
$ and $\Phi _{0}=(w_{0},w_{01},u_{0}).$ Here, the (generator) operator   $\mathbf{A}:D(\mathbf{A})\subset 
\mathbf{H}\rightarrow \mathbf{H}$ is defined as
\begin{equation}\label{eq:1.14a}
\mathbf{A}
\equiv
\begin{bmatrix}
0 & I & 0 \\[6pt]
\Delta - I & 0 & 0 \\[6pt]
0 & 0 & \Delta
\end{bmatrix},
\end{equation}

\noindent
\noindent
and the domain \(D(\mathbf{A})\subset \mathbf{H}\) is characterized as follows:\\ \\
\begin{equation}
\begin{aligned}
D(\mathbf{A})=\Big\{(z_1&,z_2,u)\in \mathbf{H}:\qquad  \nonumber\\
\text{\rm (i)}\quad
&z_1\in (H^1(\Omega_s))^d
\text{ with }\Delta z_1\in (L^2(\Omega_s))^d,
\text{ so that} \nonumber\\
&\frac{\partial z_1}{\partial \nu}\Big|_{\Gamma_s}
\in (H^{-1/2}(\Gamma_s))^d; \nonumber\\
\text{\rm (ii)}\quad
&z_2\in (H^1(\Omega_s))^d; \nonumber\\
\text{\rm (iii)}\quad
&u\in (H^1(\Omega_f))^d
\text{ with }\Delta u\in (L^2(\Omega_f))^d; \nonumber\\
\text{\rm (iv)}\quad
&\frac{\partial u}{\partial \nu}\Big|_{\Gamma_s}
\in (H^{-1/2}(\Gamma_s))^d; \nonumber\\
\text{\rm (v)}\quad
&\frac{\partial u}{\partial \nu}\Big|_{\Gamma_s}
=
\frac{\partial z_1}{\partial \nu}\Big|_{\Gamma_s}
\in (H^{-1/2}(\Gamma_s))^d; \nonumber\\
\text{\rm (vi)}\quad
&u|_{\Gamma_f}=0,\qquad
z_2|_{\Gamma_s}=u|_{\Gamma_s}
\in (H^{1/2}(\Gamma_s))^d\Big\}.
\end{aligned}
\end{equation}

\noindent One can easily use the linear  semigroup approach (as followed in \cite{AT, AR}) to show the existence uniqueness of the solution to the coupled PDE system \eqref{1.0}. This result is given as follows:
\begin{theorem}\label{thm-1.2}
With reference to the PDE model \eqref{1.0}, or the ODE system \eqref{ode}, the map
$$
\Phi_0=\{w_0,w_{01},u_0\}\longmapsto \Phi(t)=\{w(t),w_t(t),u(t)\}
$$
defines a strongly continuous contraction semigroup $\{e^{\mathbf{A}t}\}_{t\geq 0}$ on the energy space
$\mathbf H.$ Hence, for any initial data $\Phi_0=\{w_0,w_{01},u_0\}\in \mathbf H$,
there exists a unique solution 
\begin{equation}\label{eq}
\Phi(t)=\{w(t),w_t(t),u(t)\}\in C\big([0,T];\mathbf H\big)
\end{equation}
to the problem \eqref{1.0}, or \eqref{ode}. Moreover, if the initial data $\Phi_0=\{w_0, w_{01},u_0\}\in D(\mathbf{A})$ then \begin{align}
\label{8C}
\Phi(t)=\{w(t),w_t(t),u(t)\} \in C\big([0,T];D(\mathbf{A})\big).
\end{align}
\end{theorem}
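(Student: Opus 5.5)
The plan is to apply the Lumer--Phillips theorem: show that $\mathbf{A}$ is dissipative on $\mathbf{H}$ and that $I-\mathbf{A}$ (equivalently $\lambda I-\mathbf{A}$ for some $\lambda>0$) is surjective onto $\mathbf{H}$. Density of $D(\mathbf{A})$ then follows automatically, since $\mathbf{H}$ is a Hilbert space and a maximal dissipative operator there is densely defined. Lumer--Phillips gives a strongly continuous contraction semigroup $e^{\mathbf{A}t}$, and standard semigroup theory then yields the two regularity statements. For $\Phi_0\in\mathbf{H}$, the mild solution $e^{\mathbf{A}t}\Phi_0\in C([0,T];\mathbf{H})$. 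For $\Phi_0\in D(\mathbf{A})$, we have $e^{\mathbf{A}t}\Phi_0\in C([0,T];D(\mathbf{A}))\cap C^1([0,T];\mathbf{H})$, since $\mathbf{A}e^{\mathbf{A}t}\Phi_0=e^{\mathbf{A}t}\mathbf{A}\Phi_0$ is continuous. Uniqueness comes from the semigroup: any strong solution satisfies $\frac{d}{ds}e^{\mathbf{A}(t-s)}\Phi(s)=0$.

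\emph{Dissipativity.} For $\Phi=(z_1,z_2,u)\in D(\mathbf{A})$ I compute
\[
(\mathbf{A}\Phi,\Phi)_{\mathbf{H}}=(\nabla z_2,\nabla z_1)_{\Omega_s}+(z_2,z_1)_{\Omega_s}+(\Delta z_1-z_1,z_2)_{\Omega_s}+(\Delta u,u)_{\Omega_f}.
\]
I apply Green's formula to the two Laplacian terms, interpreting the boundary terms as $H^{-1/2}(\Gamma_s)$--$H^{1/2}(\Gamma_s)$ duality pairings, which is justified by conditions (i)--(iv). Both normals are taken with the orientation of Figure~\ref{fig:geometry}, where $\nu$ is inward on $\Gamma_s$ relative to $\Omega_s$'s exterior, i.e.\ outward from $\Omega_f$. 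This gives
\[
(\Delta z_1,z_2)_{\Omega_s}=-(\nabla z_1,\nabla z_2)_{\Omega_s}-\langle \partial_\nu z_1,z_2\rangle_{\Gamma_s}
\]
and
\[
(\Delta u,u)_{\Omega_f}=-\|\nabla u\|^2_{\Omega_f}+\langle\partial_\nu u,u\rangle_{\Gamma_s},
\]
where the $\Gamma_f$ contribution vanishes because $u|_{\Gamma_f}=0$. The sign bookkeeping should be arranged so the two interface terms come with opposite signs. By (v) and (vi), $\partial_\nu u=\partial_\nu z_1$ and $u=z_2$ on $\Gamma_s$, so the interface terms cancel. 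Taking real parts, the $z_1,z_2$ cross terms cancel, and we obtain $\mathrm{Re}(\mathbf{A}\Phi,\Phi)_{\mathbf{H}}=-\|\nabla u\|^2_{L^2(\Omega_f)}\le 0$.

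\emph{Range condition (main obstacle).} Given $(f_1^*,f_2^*,f_3^*)\in\mathbf{H}$, I solve $(I-\mathbf{A})(z_1,z_2,u)=(f_1^*,f_2^*,f_3^*)$. The first component gives $z_2=z_1-f_1^*$. Substituting, the problem reduces to the coupled elliptic system
\[
2z_1-\Delta z_1=f_2^*+f_1^*\ \text{in }\Omega_s,\qquad u-\Delta u=f_3^*\ \text{in }\Omega_f,
\]
with $u|_{\Gamma_f}=0$, the Dirichlet coupling $u|_{\Gamma_s}=z_1|_{\Gamma_s}-f_1^*|_{\Gamma_s}$, and the Neumann coupling $\partial_\nu u=\partial_\nu z_1$. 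The difficulty is the inhomogeneous Dirichlet coupling involving $f_1^*\in H^1(\Omega_s)$.

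I would handle it by lifting. Take $\tilde f\in H^1(\Omega_f)$ with $\tilde f|_{\Gamma_s}=f_1^*|_{\Gamma_s}$ and $\tilde f|_{\Gamma_f}=0$; this exists by the trace theorem. Write $u=v-\tilde f$. Now seek $(z_1,v)$ in the closed subspace
\[
V=\{(\phi,\psi)\in H^1(\Omega_s)\times H^1(\Omega_f):\ \psi|_{\Gamma_f}=0,\ \psi|_{\Gamma_s}=\phi|_{\Gamma_s}\}.
\]
On $V$ I use the bilinear form
\[
a((z,v),(\phi,\psi))=(\nabla z,\nabla\phi)_{\Omega_s}+2(z,\phi)_{\Omega_s}+(\nabla v,\nabla\psi)_{\Omega_f}+(v,\psi)_{\Omega_f},
\]
which is coercive on $V$. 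The right-hand side is a bounded functional built from $f_1^*,f_2^*,f_3^*$ and $\tilde f$. Lax--Milgram then gives a unique weak solution.

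Finally, testing the weak formulation with compactly supported functions recovers the interior equations, so $\Delta z_1\in L^2(\Omega_s)$ and $\Delta u\in L^2(\Omega_f)$. The normal traces then exist in $H^{-1/2}(\Gamma_s)$, and testing with general elements of $V$ recovers the Neumann matching (v) in the distributional trace sense. The lifting gives (vi). Hence the triple $(z_1,z_2,u)$ lies in $D(\mathbf{A})$, which proves maximality.
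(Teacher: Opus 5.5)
Your proposal is correct and follows the route the paper intends: the paper gives no proof of its own beyond invoking the linear semigroup (Lumer--Phillips) approach of \cite{AT, AR}, and your argument is a complete version of it, namely dissipativity via Green's formula with the interface terms cancelling through (v)--(vi), and surjectivity of $I-\mathbf{A}$ via a trace lifting plus Lax--Milgram on the matched-trace space $V$. One small remark: the opposite signs of the two interface terms are automatic and need no arranging, because the same $\nu$ appears in both normal derivatives while $\Gamma_s$ is an outer boundary for one subdomain and an inner boundary for the other, so the cancellation holds for either orientation of $\nu$.
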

\noindent In the context of this paper,  in order to obtain the necessary error bounds to determine how accurate our PINN-approximation algorithm, we will critically appeal to the \textit{``higher regularity"} result proved in \cite[Theorem 2.1]{AG} which is given for the Stokes-wave interaction system (heat equation in \eqref{1.0} is replaced with Stokes equation). For the sake of clarity and the readers' convenience we remind this result below: 
\begin{theorem}\label{thm2.1}
Let the initial data $\{w_0, w_{01},u_0\} \in D(\tilde{A})$, where $\tilde{A}$ is the generator of a Stokes-wave coupled PDE system considered in \cite{AG}. Also, assume that $
w_0\in H^2(\Omega_s).$
Then the solution $(w(t),w_t(t),u(t))$ given in \cite{AG} satisfies the following higher regularity with continuous dependence on the initial data:
\begin{align}
w &\in L^\infty(0,T;H^2(\Omega_s)), \label{eqw}\\
u &\in L^2(0,T;H^2(\Omega_f)), \label{equ}\\
p & \in L^2(0,T;H^1(\Omega_f)). 
\end{align}
\end{theorem}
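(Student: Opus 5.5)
Since the statement is quoted from \cite[Theorem 2.1]{AG}, the plan is to reconstruct its proof for the Stokes--wave system, using the strategy that also carries over to the heat--wave model \eqref{1.0}. The starting point is the semigroup result analogous to Theorem \ref{thm-1.2} for $\tilde{A}$. For data in $D(\tilde A)$, it gives $\Phi\in C([0,T];D(\tilde A))$, and since $\frac{d}{dt}\Phi=\tilde A\Phi$, also $\Phi_t\in C([0,T];\mathbf H)$. In particular we get $w_{tt}\in C([0,T];L^2(\Omega_s))$, $w_t\in C([0,T];H^1(\Omega_s))$, $u\in C([0,T];H^1(\Omega_f))$ and $u_t\in C([0,T];L^2(\Omega_f))$, all with bounds by $\|\Phi_0\|_{D(\tilde A)}$. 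Moreover, dissipativity gives $\int_0^T\|\nabla u_t\|^2\,dt\lesssim \|\tilde A\Phi_0\|^2_{\mathbf H}$, since $\Phi_t$ solves the same system with data $\tilde A\Phi_0$. From this I would extract $u_t\in L^2(0,T;H^1(\Omega_f))$, so that $u_t|_{\Gamma_s}\in L^2(0,T;H^{1/2}(\Gamma_s))$.

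The second step is the fluid $H^2$ estimate, which I would obtain by freezing time and viewing the fluid equation as an elliptic (Stokes) problem. For a.e.\ $t$, $u$ solves $-\Delta u+\nabla p=-u_t$ in $\Omega_f$, $\operatorname{div}u=0$, $u=0$ on $\Gamma_f$ and $u=w_t$ on $\Gamma_s$. The right-hand side is in $L^2$, but the Dirichlet datum $w_t|_{\Gamma_s}$ is only known to be in $H^{1/2}(\Gamma_s)$, which is not enough for $H^2$. I would therefore switch to the Neumann-type transmission condition $\partial_\nu u - p\nu=\partial_\nu w$ on $\Gamma_s$ and treat this as a mixed Dirichlet ($\Gamma_f$)/Neumann ($\Gamma_s$) Stokes problem. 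Elliptic regularity for the mixed problem (the two boundary pieces are disjoint and smooth) gives
\[
\|u\|_{H^2(\Omega_f)}+\|p\|_{H^1(\Omega_f)}\lesssim \|u_t\|_{L^2(\Omega_f)}+\|\partial_\nu w\|_{H^{1/2}(\Gamma_s)}.
\]

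The third step treats the structure. Here $w$ solves $-\Delta w+w=-w_{tt}\in L^2(\Omega_s)$ with Neumann datum $\partial_\nu w|_{\Gamma_s}=(\partial_\nu u - p\nu)|_{\Gamma_s}$. Neumann elliptic regularity gives $\|w\|_{H^2(\Omega_s)}\lesssim\|w_{tt}\|+\|\partial_\nu w\|_{H^{1/2}(\Gamma_s)}$, so the two estimates are circular. To break the cycle I would use the hypothesis $w_0\in H^2(\Omega_s)$ and write
\[
w(t)=w_0+\int_0^t w_t\,ds .
\]
I would then gain regularity of $w_t$ on $\Gamma_s$: by trace, $w_t=u$ on $\Gamma_s$, and since $u\in L^2(0,T;H^2)$ is what we are trying to prove, this would be handled through a fixed point or continuity argument. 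Alternatively, I would run a Dirichlet elliptic estimate for $w$ with datum $\int_0^t u\,ds|_{\Gamma_s}$, using $u\in L^2(0,T;H^{3/2}(\Gamma_s))$. Combining the two steps with Gronwall in the form
\[
\|w(t)\|_{H^2}\lesssim \|w_0\|_{H^2}+\int_0^t\|u\|_{H^2}\,ds,\qquad \|u\|_{H^2}\lesssim C+\|w\|_{H^2},
\]
closes the loop, yielding \eqref{eqw} and \eqref{equ}, and the bound on $p$ then follows from the Stokes estimate.

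The main obstacle is exactly this coupling loop. The Dirichlet-type condition gives $w_t\in H^{1/2}(\Gamma_s)$ only, and the Neumann-type condition links $\partial_\nu u$ and $\partial_\nu w$ at the same, borderline level. The key is choosing which boundary condition to feed each elliptic problem so that the time integration over $[0,t]$ supplies the smallness or integrability needed for Gronwall. I would first try the Dirichlet route for the wave, since it lets $\int_0^t u$ deliver the extra half derivative.
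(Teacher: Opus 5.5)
The paper gives no proof of this statement. It is quoted from \cite[Theorem 2.1]{AG}, and the only hint at the argument is a later remark that \cite{AG} proceeds via pseudodifferential operators. Your route is different and more elementary. You freeze $t$ and use $u_t\in C([0,T];L^2(\Omega_f))$ and $w_{tt}\in C([0,T];L^2(\Omega_s))$, which follow from $\Phi\in C^1([0,T];\mathbf H)$. You then solve the fluid as a mixed problem with the Neumann (traction) datum $\partial_\nu w$ on $\Gamma_s$, and the structure as a Dirichlet problem with datum $w_0|_{\Gamma_s}+\int_0^t u|_{\Gamma_s}$. Both elliptic estimates are right, and this pairing of interface conditions with subproblems is the correct one.

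The gap is in closing the loop. The pair $\|w(t)\|_{H^2}\lesssim\|w_0\|_{H^2}+\int_0^t\|u\|_{H^2}$, $\|u\|_{H^2}\lesssim C+\|w\|_{H^2}$ is only an a priori estimate. Its right-hand sides, namely $\partial_\nu w(s)\in H^{1/2}(\Gamma_s)$ and $\int_0^t u|_{\Gamma_s}\in H^{3/2}(\Gamma_s)$, are meaningful and finite only if the conclusion already holds. So Gronwall cannot be applied to the semigroup solution. A continuity argument has nothing to start from: you have no $H^2$ information at any $t>0$ and no continuity of $t\mapsto\|w(t)\|_{H^2}$. A fixed point in the $H^2$ class only produces \emph{some} regular pair, which you would still have to identify with the given solution.

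A clean way to do both at once is to run the loop on the interface, where it becomes a Volterra equation with an operator bounded on both relevant Sobolev scales. Define:
\begin{itemize}
\item $\phi(t):=w(t)|_{\Gamma_s}$;
\item $E_s$, the Dirichlet extension for $-\Delta+1$ into $\Omega_s$;
\item $N_f h$, the fluid (heat or Stokes) velocity with zero forcing, zero Dirichlet data on $\Gamma_f$, and Neumann (traction) datum $h$ on $\Gamma_s$;
\item $K\phi:=(N_f\,\partial_\nu E_s\phi)|_{\Gamma_s}$.
\end{itemize}
Write $w(t)=z(t)+E_s\phi(t)$, where $z(t)\in H^2\cap H^1_0(\Omega_s)$ solves $(-\Delta+1)z(t)=-w_{tt}(t)$. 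Using $w(t)=w_0+\int_0^t w_t$ in $H^1(\Omega_s)$, $w_t=u$ on $\Gamma_s$, and uniqueness of weak solutions of the mixed fluid problem with $H^{-1/2}$ Neumann data, you get
\[
\phi(t)=g(t)+\int_0^t K\phi(s)\,ds .
\]
Here $g(t)$ is $w_0|_{\Gamma_s}$ plus the time integral of the $\Gamma_s$-traces of the fluid solutions driven by $-u_t(s)$ and by the Neumann datum $\partial_\nu z(s)$. Since $w_0\in H^2(\Omega_s)$, you have $g\in C([0,T];H^{3/2}(\Gamma_s))$.

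The operator $K$ is an order $+1$ map followed by an order $-1$ map, so it is bounded on $H^{1/2}(\Gamma_s)$ and on $H^{3/2}(\Gamma_s)$. Hence the Volterra equation has a unique solution in $C([0,T];H^{1/2}(\Gamma_s))$. That solution must be the actual trace, since $w\in C([0,T];H^1(\Omega_s))$. By Picard iteration the equation also has a solution in $C([0,T];H^{3/2}(\Gamma_s))$, and the two coincide. Your Gronwall inequality thus becomes the $H^{3/2}$ bound for this equation, now applied to a quantity known to be finite.

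Your two elliptic estimates then give $w\in C([0,T];H^2(\Omega_s))$, $u\in C([0,T];H^2(\Omega_f))$ and $p\in C([0,T];H^1(\Omega_f))$. The bounds are in terms of $\|\Phi_0\|_{D(\tilde A)}+\|w_0\|_{H^2(\Omega_s)}$, and this contains \eqref{eqw}--\eqref{equ}. With this step supplied, your argument is a self-contained replacement for the citation, needing only standard elliptic regularity on each subdomain. The bound $u_t\in L^2(0,T;H^1(\Omega_f))$ from your first step is not used.
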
 

\section{Neural Networks for the PDE system \eqref{1.0}}
\label{4.1N}
\noindent Our main goal in this manuscript is to develop a PINN approximation framework for the coupled heat-wave system \eqref{1.0}. To this end, this section introduces a neural network architecture employed in our analysis and defines the terminology required for the error estimates that follow throughout the manuscript. We begin with the following definition for the neural networks.

\begin{definition}
\label{NN}
Let $R\in(0,\infty)$, $L,W\in\mathbb{N}$ with $L\ge2$.
Fix input and output dimensions $l_0,l_L\in\mathbb{N}$, and let the hidden widths
$l_j\in\{1,\ldots,W\},\; 1\le j\le L-1$, and
 $\sigma:\mathbb{R}\to\mathbb{R}$ be a $C^4$ activation function. Define the parameter set
$$
\Theta_{L,W,R}(l_0,l_L)
:=
\bigcup_{\,l_1,\ldots,l_{L-1}\in\{1,\ldots,W\}}
\ \prod_{j=1}^{L}
\Big( [-R,R]^{\,l_j\times l_{j-1}} \times [-R,R]^{\,l_j}\Big).
$$
For $\theta\in\Theta_{L,W,R}(l_0,l_L)$, denote $\theta_j=(W_j,b_j)$ and define the affine maps
$$
A_j^\theta:\mathbb{R}^{l_{j-1}}\to\mathbb{R}^{l_j},
\qquad
A_j^\theta(z)=W_j z + b_j,
\qquad 1\le j\le L,
$$
together with the layer maps
$$
f_j^\theta:\mathbb{R}^{l_{j-1}}\to\mathbb{R}^{l_j},
\qquad
f_j^\theta(z)=
\begin{cases}
(\sigma\circ A_j^\theta)(z), & 1\le j<L,\\[1mm]
A_L^\theta(z), & j=L.
\end{cases}
$$
We define the composed network $\varPhi_\theta:\mathbb{R}^{l_0}\to\mathbb{R}^{l_L}$ by
$
\varPhi_\theta(z) =( f_L^\theta\circ f_{L-1}^\theta\circ\cdots\circ f_1^\theta)(z),
$
for all $z \in \mathbb{R}^{l_0}$.
\end{definition}
\begin{remark}
For $1\le j\le L$, layer $j$ has width $l_j$ and $(W_j,b_j)$ are its weights and biases.
The first $L-1$ layers are the hidden layers, and the network width is $\max\{l_0,\ldots,l_L\}$.
If $L=2$ the network is shallow; if $L\ge3$ it is deep.
The $C^4$ requirement on $\sigma$ ensures that the PDE residuals, which involve fourth-order
derivatives of $u_\theta$ and $w_\theta$, are classically defined. 
\end{remark}
\noindent With the geometry introduced in Section~\ref{section2}, we now specialize
Definition~\ref{NN} to the PDE system \eqref{1.0}. Set
$$
l_0=d+1,\qquad z=(t,x)\in\mathbb{R}^{d+1},\qquad l_L=2d,
$$
so that $\varPhi_\theta:\mathbb{R}^{d+1}\to\mathbb{R}^{2d}$.
Define $\pi_u,\pi_w:\mathbb{R}^{2d}\to\mathbb{R}^{d}$ by
\begin{align*}
\pi_u(\xi_1,\ldots,\xi_{2d}) := (\xi_1,\ldots,\xi_d),
\qquad
\pi_w(\xi_1,\ldots,\xi_{2d}) := (\xi_{d+1},\ldots,\xi_{2d}),
\end{align*}
and define the domain-restricted network components.
\begin{align*}
u_\theta := (\pi_u\circ \varPhi_\theta)\big\lvert_{(0,T)\times\Omega_f} :
(0,T)\times\Omega_f \to \mathbb{R}^{d},
\quad
w_\theta := (\pi_w\circ \varPhi_\theta)\big\rvert_{(0,T)\times\Omega_s} :
(0,T)\times\Omega_s \to \mathbb{R}^{d}.
\end{align*}
Note that both components $u_\theta$ and $w_\theta$ are derived from a single shared network
$\varPhi_\theta$, evaluated on their respective subdomains. The heat residual uses $u_\theta$ on
$(0,T)\times\Omega_f$, the wave residual uses $w_\theta$ on $(0,T)\times\Omega_s$, and all
interface residuals are evaluated by restricting $\varPhi_\theta$ to $(0,T)\times\Gamma_s$. The time derivative $w_{\theta\, t}$ is computed using automatic differentiation.\\
\subsection{Quadrature rules}
\label{quad}
\medskip

\noindent In the PINN approximation framework, the generalization error is defined through integrals
of residuals over space-time and boundary domains. Since these integrals
cannot be evaluated exactly in practice, we approximate them numerically
using midpoint quadrature rules. This allows us to replace the continuous
loss functional by a computable discrete sum that is used in training.
\medskip

\noindent
We consider integrals of a function $f$ over the physical sets
$
(0,T)\times\Omega_f,\;
(0,T)\times\Omega_s,\;
(0,T)\times\Gamma_f,\;
(0,T)\times\Gamma_s,\;
\Omega_f\;\textrm{and}\;
\Omega_s.
$
Let $\Lambda$ denote any one of the above sets, whose dimension is $d+1$ for
$\Lambda=(0,T)\times\Omega_f$ or $(0,T)\times\Omega_s$, and $d$ for the remaining sets.\\

\noindent
To construct the quadrature, let $\widetilde{\Lambda}$ be a closed axis-aligned box containing
$\Lambda$, and introduce a uniform Cartesian grid on $\widetilde{\Lambda}$ with
$\{y_i\}_{i=1}^M \subset \widetilde{\Lambda}$ the midpoints of the grid cells.

\medskip

\noindent
Since the Cartesian grid on $\widetilde{\Lambda}$ does not in general conform to $\Lambda$,
we restrict the quadrature to $\Lambda$ by means of an indicator function
$$
\mathbf{1}_\Lambda(y_i)
=
\begin{cases}
1, & y_i \in \Lambda,\\
0, & y_i \notin \Lambda.
\end{cases}
$$
\noindent
The midpoint quadrature rule used in this work is therefore
$
Q_M^\Lambda[f]
:=
\sum_{i=1}^M \mathbf{1}_\Lambda(y_i)\, f(y_i)\, \lvert K\rvert,
$
where only the midpoints lying inside $\Lambda$ contribute to the sum. Depending on the type of domain $\Lambda$, the cell measure $\lvert K\rvert$ is given by
$$
\lvert K\rvert=
\begin{cases}
\displaystyle\prod_{i=1}^{d}\Delta x_i\,\,\Delta t, & \Lambda \subset (0,T)\times\Omega,\\[6pt]
\displaystyle\prod_{i=1}^{d}\Delta x_i, & \Lambda \subset \Omega,\\[6pt]
\Delta s\,\Delta t, & \Lambda \subset (0,T)\times\Gamma,\\[4pt]
\Delta s, & \Lambda \subset \Gamma,
\end{cases}
$$
where $\Delta x_1,\ldots,\Delta x_d,\Delta t$ denote the Cartesian grid spacings and
$\Delta s$ is the uniform spacing of the quadrature nodes along $\Gamma$. For $N\in\mathbb N$, divide each coordinate direction of
$\widetilde{\Lambda}$ into $N$ equal subintervals. If
$L_1,\ldots,L_{ d}$ denote the side lengths of
$\widetilde{\Lambda}$, then the corresponding grid spacings are
$
\Delta y_j=\frac{L_j}{N},\; j=1,\ldots,d.
$\\

\noindent Define also the quadrature spacing  by
$
h:=\max_{1\le j\le d}\Delta y_j
=
\frac{\max_{1\le j\le  d}L_j}{N}
$ with $
M=N^{ d}
$. For the $(d+1)$-dimensional space-time interior domains, $d$ in these expressions is replaced by $d+1$.
The integral over $\Lambda$ is approximated by
$$
\int_\Lambda f
\;\approx\;
Q_M^\Lambda[f]
=
\sum_{i=1}^M \mathbf{1}_\Lambda(y_i)\, f(y_i)\, \lvert K\rvert.
$$
Since the curved physical domains are not aligned with the Cartesian
background grid, the standard midpoint accuracy degenerates to first
order near $\partial\Lambda$; see,
e.g., \cite{LZ}. 
\noindent
If $g\in C^2(\Lambda)$, then the quadrature error satisfies
\begin{align}
\label{quad_err}
\left\lvert
\int_\Lambda g-Q_M^\Lambda[g]
\right\rvert
\le
C_\Lambda
\lVert g\rVert_{C^2(\Lambda)}
M^{-1/d}\lesssim \lVert g\rVert_{C^2(\Lambda)}.
\end{align}
For the $(d+1)$-dimensional space-time interior domains, $M^{-1/d}$ in
\eqref{quad_err} is replaced by $M^{-1/(d+1)}$.\\

\subsection{Physics-Informed Neural Networks}
\noindent Physics-informed neural networks (PINNs) provide a framework for approximating solutions of partial differential equations using feedforward neural networks. For the coupled system \eqref{1.0}, the residuals consist of the interior residuals in the domains $\Omega_f$ and $\Omega_s$, the outer boundary residual on $\Gamma_f$, the interface residuals on $\Gamma_s$, and the initial residuals at $t=0$.
\medskip

\noindent We now describe the PINN framework for the PDE system \eqref{1.0}.  Let $(w_\theta,w_{\theta t}, u_\theta)$ be a neural network pair parameterized by $\theta\in\Theta$ approximating the solution $(w,w_t, u)$ of \eqref{1.0}. The residuals associated with the coupled heat-wave system are defined by
\begin{align}
\text{Interior:}\quad
&\mathcal R_u := \partial_t u_\theta - \Delta u_\theta 
\quad \text{in } (0,T)\times\Omega_f,
\label{resi}\\
&\mathcal R_w := \partial_{tt} w_\theta - \Delta w_\theta + w_\theta 
\quad \text{in } (0,T)\times\Omega_s,\\[1ex]
\text{Boundary:}\quad
&\mathcal R_{BC} := u_\theta 
\quad \text{on } (0,T)\times\Gamma_f,\\[1ex]
\text{Interface:}\quad
&\mathcal R_{IF1} := u_\theta - \partial_t w_\theta
\quad \text{on } (0,T)\times\Gamma_s,\\
&\mathcal R_{IF2} := \partial_\nu u_\theta - \partial_\nu w_\theta
\quad \text{on } (0,T)\times\Gamma_s,\\[1ex]
\text{Initial:}\quad
&\mathcal R_{IC1} := w_\theta(0,\cdot) - w_0 
\quad \text{in } \Omega_s,\\
&\mathcal R_{IC2} := \partial_t w_\theta(0,\cdot) - w_{01} 
\quad \text{in } \Omega_s,\\
&\mathcal R_{IC3} := u_\theta(0,\cdot) - u_0 
\quad \text{in } \Omega_f.
\label{resIC}
\end{align}
These residuals quantify the extent to which $(w_\theta, w_{\theta t}, u_\theta,)$ satisfies the coupled system \eqref{1.0}. In particular, for the exact solution $(w,w_t, u)$, all residuals vanish.
\medskip

\noindent
Having defined the residuals above, we next derive an energy estimate for the $L^2$-error between the exact solution $(w,w_t, u)$ and the PINN approximation $(w_\theta, w_{\theta t}, u_\theta)$ which provides an affirmative answer to Question~\textbf{[Q1]} posed in Section 1.

\begin{theorem}
\label{4.2S}
Assume the hypotheses of Theorem~\ref{thm2.1} hold for $d\in \{2,3\}$. Let $(w,w_t, u)$ be the exact solution
of the PDE system \eqref{1.0}, and let $(w_{\theta},w_{\theta t}, u_{\theta})$ be the corresponding PINN approximation. Then, for $
\hat{u}:=u_\theta-u$
$
\hat{w}:=w_\theta-w$, and $\hat{w}_t=w_{\theta t}-w_t$  
\begin{align}
\lVert \hat u\rVert_{L^2(\Omega_f\times[0,T])}^2
+ \lVert \partial_t \hat w\rVert_{L^2(\Omega_s\times[0,T])}^2
+ \lVert \hat w\rVert_{H^1(\Omega_s\times[0,T])}^2
\le C_{\mathcal{R}}\,(T + T^2 e^{T})
\label{22E}
\end{align}
where \begin{align}
\label{aslm}
C_{\mathcal{R}} &:=
\lVert\mathcal{R}_{IC1}\rVert_{H^1(\Omega_s)}^2
+\lVert\mathcal{R}_{IC2}\rVert_{L^2(\Omega_s)}^2
+\lVert\mathcal{R}_{IC3}\rVert_{L^2(\Omega_f)}^2
+\lVert\mathcal{R}_{u}\rVert_{L^2(\Omega_f\times[0,T])}^{2}
+\lVert\mathcal{R}_{w}\rVert_{L^2(\Omega_s\times[0,T])}^{2}
\nonumber\\
&\quad
+C\lVert\mathcal{R}_{BC}\rVert_{L^2(\Gamma_f\times[0,T])}
+C\lVert\mathcal{R}_{IF1}\rVert_{L^2(\Gamma_s\times[0,T])}
+C\lVert\mathcal{R}_{IF2}\rVert_{L^2(\Gamma_s\times[0,T])}
\nonumber\\
&\quad
+\lVert\mathcal{R}_{IF1}\rVert_{L^2(\Gamma_s\times[0,T])}^{2}
+\lVert\mathcal{R}_{IF2}\rVert_{L^2(\Gamma_s\times[0,T])}^{2}.
\end{align}
\end{theorem}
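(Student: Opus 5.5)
The plan is an energy estimate for the error system. Subtracting \eqref{1.0} (with $f_1=f_2=0$) from the residual definitions \eqref{resi}--\eqref{resIC}, the error $(\hat w,\hat w_t,\hat u)$ satisfies
\begin{align*}
\hat u_t-\Delta\hat u=\mathcal R_u \ \text{in }(0,T)\times\Omega_f,\qquad
\hat w_{tt}-\Delta\hat w+\hat w=\mathcal R_w \ \text{in }(0,T)\times\Omega_s,
\end{align*}
with the inhomogeneous boundary and interface data $\hat u=\mathcal R_{BC}$ on $\Gamma_f$, $\hat u-\hat w_t=\mathcal R_{IF1}$ and $\partial_\nu\hat u-\partial_\nu\hat w=\mathcal R_{IF2}$ on $\Gamma_s$. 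The initial data are $\hat w(0)=\mathcal R_{IC1}$, $\hat w_t(0)=\mathcal R_{IC2}$ and $\hat u(0)=\mathcal R_{IC3}$.

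I will multiply the heat error equation by $\hat u$ and integrate over $\Omega_f$, and multiply the wave error equation by $\hat w_t$ and integrate over $\Omega_s$. After Green's formula, this gives
\[
\tfrac12\tfrac{d}{dt}\Big(\|\hat u\|^2_{L^2(\Omega_f)}+\|\hat w_t\|^2_{L^2(\Omega_s)}+\|\hat w\|^2_{H^1(\Omega_s)}\Big)+\|\nabla\hat u\|^2_{L^2(\Omega_f)}
=(\mathcal R_u,\hat u)+(\mathcal R_w,\hat w_t)+\text{boundary terms}.
\]
The boundary terms are $\int_{\Gamma_f}\partial_\nu\hat u\,\mathcal R_{BC}$ together with the interface terms $\pm\int_{\Gamma_s}\big(\partial_\nu\hat u\,\hat u-\partial_\nu\hat w\,\hat w_t\big)$, with signs fixed by the orientation of $\nu$ (inward on $\Gamma_s$ for $\Omega_f$). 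The key algebraic step is to insert the transmission residuals:
\[
\partial_\nu\hat u\,\hat u-\partial_\nu\hat w\,\hat w_t=\partial_\nu\hat u\,\mathcal R_{IF1}+\mathcal R_{IF2}\,\hat w_t .
\]
In the exact problem the interface contributions cancel, which is what makes the semigroup of Theorem~\ref{thm-1.2} contractive. Here they leave only terms linear in the residuals.

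The main obstacle is controlling these boundary terms. They contain the traces $\partial_\nu\hat u$ on $\Gamma_f\cup\Gamma_s$ and $\hat w_t$ on $\Gamma_s$, and these traces are not bounded by the energy. My plan is to avoid absorbing them and instead bound them with Cauchy--Schwarz in $L^2((0,T)\times\Gamma)$ by a constant $C$ times $\|\mathcal R_{BC}\|$, $\|\mathcal R_{IF1}\|$ or $\|\mathcal R_{IF2}\|$. The constant $C$ is built from:
\begin{itemize}
\item the trace norms of the exact solution, finite by the higher regularity of Theorem~\ref{thm2.1} transferred to the heat-wave case: $u\in L^2(0,T;H^2(\Omega_f))$ gives $\partial_\nu u\in L^2(0,T;L^2(\Gamma))$, $w\in L^\infty(0,T;H^2)$ gives the trace of $\partial_\nu w$, and $u=w_t$ on $\Gamma_s$ gives the trace of $w_t$;
\item the corresponding norms of the network, finite since $\sigma\in C^4$ and $\theta$ ranges over a compact parameter set.
\end{itemize}
This is exactly why $C_{\mathcal R}$ in \eqref{aslm} contains the unsquared terms $C\|\mathcal R_{BC}\|$, $C\|\mathcal R_{IF1}\|$ and $C\|\mathcal R_{IF2}\|$. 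Where a product of two residuals appears, such as $\mathcal R_{IF2}\,\mathcal R_{IF1}$ coming from $\hat w_t=\hat u-\mathcal R_{IF1}$, I will use Young's inequality, which produces the squared interface terms in \eqref{aslm}.

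For the remaining interior terms, Young's inequality gives $(\mathcal R_u,\hat u)+(\mathcal R_w,\hat w_t)\le\frac12(\|\mathcal R_u\|^2+\|\mathcal R_w\|^2)+\frac12 E(t)$, where $E(t)$ denotes the bracketed energy; the dissipation term $\|\nabla\hat u\|^2$ is simply dropped. Integrating from $0$ to $\tau\le T$, the initial energy is bounded by the IC residual norms, so $E(\tau)\le C_{\mathcal R}+\int_0^\tau E$. Gronwall's inequality then gives $E(\tau)\le C_{\mathcal R}(1+\tau e^{\tau})$. Integrating over $\tau\in[0,T]$ gives at most $C_{\mathcal R}(T+T^2e^T)$, which is \eqref{22E}.
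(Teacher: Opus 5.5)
Your proposal is correct and follows essentially the same route as the paper: test the error system with $\hat u$ and $\hat w_t$, use the transmission residuals to cancel the interface flux terms, bound the leftover boundary terms by H\"{o}lder and the trace theorem with the higher regularity of Theorem~\ref{thm2.1} (and the boundedness of the network) absorbed into $C$, apply Young to the interior and residual-product terms, and finish with Gr\"{o}nwall and integration in time. The only difference is cosmetic: your split $\partial_\nu\hat u\,\mathcal R_{IF1}+\mathcal R_{IF2}\,\hat w_t$ uses the trace of $\partial_\nu\hat u$ on $\Gamma_s$ (controlled by $\hat u\in L^2(0,T;H^2(\Omega_f))$), whereas the paper writes $\mathcal R_{IF1}\,\partial_\nu\hat w+\mathcal R_{IF2}\,\hat w_t+\mathcal R_{IF1}\mathcal R_{IF2}$ and uses $\hat w\in L^2(0,T;H^2(\Omega_s))$, which is where the squared interface terms in $C_{\mathcal R}$ come from.
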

\begin{proof}
We begin by comparing the PINN approximation with the exact solution of
\eqref{1.0}. Subtracting the exact solutions from the PINN solutions, the error
$(\hat{w},\hat{w}_t ,\hat{u})$ satisfies
\begin{equation}
\label{1000}
\left\{
\begin{aligned}
\hat{u}_t - \Delta \hat{u} &= \mathcal{R}_u&&\text{in } (0,T)\times\Omega_f, \\[4pt]
\hat{w}_{tt} - \Delta \hat{w} + \hat{w} &= \mathcal{R}_w &&\text{in } (0,T)\times\Omega_s, \\[6pt]
\hat{u}|_{\Gamma_f} &= \mathcal{R}_{BC} &&\text{on } (0,T)\times\Gamma_f, \\[4pt]
\hat{u}-\hat{w}_t &=\mathcal{R}_{IF1}&&\text{on } (0,T)\times\Gamma_s, \\[3pt]
\dfrac{\partial \hat{u}}{\partial \nu} - \dfrac{\partial \hat{w}}{\partial \nu}&=\mathcal{R}_{IF2}
&&\text{on } (0,T)\times\Gamma_s, \\[6pt]
[\hat{w}(0,\cdot),\, \hat{w}_t(0,\cdot),\, \hat{u}(0,\cdot)]
&= [\mathcal{R}_{IC1},\mathcal{R}_{IC2}, \mathcal{R}_{IC3}]
&&\text{in } \mathbf{H}.
\end{aligned}
\right.
\end{equation}
Multiplying $\eqref{1000}_1$ and $\eqref{1000}_2$ with $\hat{u}$ and $\hat{w}_t,$ respectively and using Green's theorem gives
\begin{align}
\label{CMO}
\frac{1}{2}\frac{d}{dt} \Big(\lVert \hat{u}\rVert_{L^2(\Omega_f)}^{2}
&+\lVert \hat{w}_t\rVert_{L^2(\Omega_s)}^{2}
+\lVert \nabla \hat{w}\rVert_{L^2(\Omega_s)}^{2}
+\lVert \hat{w}\rVert_{L^2(\Omega_s)}^{2}\Big)
+\lVert \nabla \hat{u}\rVert_{L^2(\Omega_f)}^{2}
\nonumber\\
&= \int_{\Omega_f} \mathcal{R}_u \cdot \hat{u} \,dx
+\int_{\Omega_s}\mathcal{R}_w \cdot \hat{w}_t \,dx
+\int_{\Gamma_f} \hat{u} \frac{\partial \hat{u}}{\partial \nu} \,d \Gamma_f \nonumber\\
&+\int_{\Gamma_s}\hat{w}_t \frac{\partial \hat{w}}{\partial \nu}\,d\Gamma_s 
-\int_{\Gamma_s} \hat{u} \, \frac{\partial \hat{u}}{\partial \nu}\,d\Gamma_s.
\end{align}
For the last term on the RHS of \eqref{CMO}: we use the interface relations $$\hat{u} = \hat{w}_t + \mathcal{R}_{IF1}; \qquad
\partial_\nu\hat{u} = \partial_\nu\hat{w} + \mathcal{R}_{IF2}$$ to get
\begin{align}
\int_{\Gamma_s}\hat{u}\,\frac{\partial\hat{u}}{\partial\nu}\, d \Gamma_s
&= \int_{\Gamma_s}(\hat{w}_t+\mathcal{R}_{IF1})(\partial_\nu\hat{w}+\mathcal{R}_{IF2})\, d \Gamma_s \nonumber
\\
&= \int_{\Gamma_s}\hat{w}_t\,\partial_\nu\hat{w}\,d \Gamma_s
+\int_{\Gamma_s}\hat{w}_t\,\mathcal{R}_{IF2}\,d \Gamma_s \nonumber\\
&+\int_{\Gamma_s}\mathcal{R}_{IF1}\,\partial_\nu\hat{w}\,d \Gamma_s
+\int_{\Gamma_s}\mathcal{R}_{IF1}\,\mathcal{R}_{IF2}\,d \Gamma_s.
\label{asKL}
\end{align}
Substituting \eqref{asKL} into \eqref{CMO}, and  seeing the cancellation of the terms
$\int_{\Gamma_s}\hat{w}_t\,\partial_\nu\hat{w}$  we obtain
\begin{align}
\label{CLND}
\frac{1}{2}\frac{d}{dt} \Big(\lVert \hat{u}\rVert_{L^2(\Omega_f)}^{2}
&+\lVert \hat{w}_t\rVert_{L^2(\Omega_s)}^{2}
+\lVert \hat{w}\rVert_{H^1(\Omega_s)}^{2}\Big)
+\lVert \nabla \hat{u}\rVert_{L^2(\Omega_f)}^{2}
\nonumber\\
&= \int_{\Omega_f} \mathcal{R}_u \cdot \hat{u} \,dx
+\int_{\Omega_s}\mathcal{R}_w \cdot \hat{w}_t \,dx
+\int_{\Gamma_f} \hat{u} \frac{\partial \hat{u}}{\partial \nu} \,d \Gamma_f
\nonumber\\
&\quad
-\int_{\Gamma_s}\hat{w}_t\,\mathcal{R}_{IF2}\,d\Gamma_s
-\int_{\Gamma_s}\mathcal{R}_{IF1}\,\partial_\nu\hat{w}\,d\Gamma_s
-\int_{\Gamma_s}\mathcal{R}_{IF1}\,\mathcal{R}_{IF2}\,d\Gamma_s,
\end{align}
We now apply H\"{o}lder's and Young's inequalities to estimate the first and second terms on the right-hand side of \eqref{CLND} to obtain
\begin{align}
\left\lvert \int_{\Omega_f}\mathcal{R}_u\cdot\hat{u}\right\rvert
&\le \tfrac{1}{2}\lVert\mathcal{R}_u\rVert_{L^2(\Omega_f)}^2
   +\tfrac{1}{2}\lVert\hat{u}\rVert_{L^2(\Omega_f)}^2,
   \label{edr}
\\
\left\lvert \int_{\Omega_s}\mathcal{R}_w\cdot\hat{w}_t\right\rvert
&\le \tfrac{1}{2}\lVert\mathcal{R}_w\rVert_{L^2(\Omega_s)}^2
   +\tfrac{1}{2}\lVert\hat{w}_t\rVert_{L^2(\Omega_s)}^2.
   \label{s2dw}
\end{align}
For the third term on the right-hand side of \eqref{CLND}, using
$\hat{u}|_{\Gamma_f}=\mathcal{R}_{BC}$ from \eqref{1000}$_3$, applying
H\"{o}lder's inequality and the Sobolev trace theorem we get
\begin{align}
\left\lvert \int_{\Gamma_f}\hat{u}\,\frac{\partial\hat{u}}{\partial\nu}\right\rvert
&\le \lVert\mathcal{R}_{BC}\rVert_{L^2(\Gamma_f)}
   \lVert\partial_\nu\hat{u}\rVert_{L^2(\Gamma_f)}
\nonumber\\
&\le C\lVert\mathcal{R}_{BC}\rVert_{L^2(\Gamma_f)}
   \lVert\hat{u}\rVert_{H^2(\Omega_f)}.
   \label{po3}
\end{align}
For the fourth, fifth, and sixth terms on the right-hand side of \eqref{CLND}, again applying H\"{o}lder's inequality and the Sobolev trace theorem yield
\begin{align}
\left\lvert \int_{\Gamma_s}\hat{w}_t\,\mathcal{R}_{IF2}\right\rvert
&\le \lVert\mathcal{R}_{IF2}\rVert_{L^2(\Gamma_s)}
   \lVert\hat{w}_t\rVert_{L^2(\Gamma_s)}
   \nonumber\\
&\le C\lVert\mathcal{R}_{IF2}\rVert_{L^2(\Gamma_s)}
   \lVert\hat{w}_t\rVert_{H^1(\Omega_s)},\label{yhx}\\
\left\lvert \int_{\Gamma_s}\mathcal{R}_{IF1}\,\partial_\nu\hat{w}\right\rvert
&\le \lVert\mathcal{R}_{IF1}\rVert_{L^2(\Gamma_s)}
   \lVert\partial_\nu\hat{w}\rVert_{L^2(\Gamma_s)}
\nonumber\\
&\le C\lVert\mathcal{R}_{IF1}\rVert_{L^2(\Gamma_s)}
   \lVert\hat{w}\rVert_{H^2(\Omega_s)},\label{fgt}
\\
\left\lvert \int_{\Gamma_s}\mathcal{R}_{IF1}\,\mathcal{R}_{IF2}\right\rvert
&\le \tfrac{1}{2}\lVert\mathcal{R}_{IF1}\rVert_{L^2(\Gamma_s)}^2
   +\tfrac{1}{2}\lVert\mathcal{R}_{IF2}\rVert_{L^2(\Gamma_s)}^2.
   \label{kl4}
\end{align}
Using the preceding estimates \eqref{edr}-\eqref{kl4} in \eqref{CLND},
and then integrating over $[0,t]$,
applying H\"{o}lder's inequality in time and dropping the nonnegative term
$\int_0^t\lVert\nabla\hat{u}(s)\rVert_{L^2(\Omega_f)}^2\,ds$, we obtain
\begin{align}
\label{lcx2}
\lVert\hat{u}(t)\rVert_{L^2(\Omega_f)}^2
+\lVert\hat{w}_t(t)\rVert_{L^2(\Omega_s)}^2
+\lVert\hat{w}(t)\rVert_{H^1(\Omega_s)}^2
&\le \lVert\hat{u}(0)\rVert_{L^2(\Omega_f)}^2
+\lVert\hat{w}_t(0)\rVert_{L^2(\Omega_s)}^2
+\lVert\hat{w}(0)\rVert_{H^1(\Omega_s)}^2
\nonumber\\
&+\int_0^t \Big(
\lVert\hat{u}(s)\rVert_{L^2(\Omega_f)}^2
+\lVert\hat{w}_t(s)\rVert_{L^2(\Omega_s)}^2
+\lVert\hat{w}(s)\rVert_{H^1(\Omega_s)}^2
\Big)\,ds
\nonumber\\
&+\lVert\mathcal{R}_u\rVert_{L^2(\Omega_f\times[0,T])}^2
+\lVert\mathcal{R}_w\rVert_{L^2(\Omega_s\times[0,T])}^2
\nonumber\\
&
+2C\lVert\mathcal{R}_{BC}\rVert_{L^2(\Gamma_f\times[0,T])}
\lVert\hat{u}\rVert_{L^2(0,T;H^2(\Omega_f))}
\nonumber\\
&
+2C\lVert\mathcal{R}_{IF1}\rVert_{L^2(\Gamma_s\times[0,T])}
\lVert\hat{w}\rVert_{L^2(0,T;H^2(\Omega_s))}
\nonumber\\
&
+2C\lVert\mathcal{R}_{IF2}\rVert_{L^2(\Gamma_s\times[0,T])}
\lVert\hat{w}_t\rVert_{L^2(0,T;H^1(\Omega_s))}
\nonumber\\
&
+\lVert\mathcal{R}_{IF1}\rVert_{L^2(\Gamma_s\times[0,T])}^2
+\lVert\mathcal{R}_{IF2}\rVert_{L^2(\Gamma_s\times[0,T])}^2,
\end{align}
Using now the initial condition
\begin{align}
\lVert\hat{w}(0)\rVert_{H^1(\Omega_s)}^2
+\lVert\hat{w}_t(0)\rVert_{L^2(\Omega_s)}^2
+\lVert\hat{u}(0)\rVert_{L^2(\Omega_f)}^2=
\lVert\mathcal{R}_{IC1}\rVert_{H^1(\Omega_s)}^2
+\lVert\mathcal{R}_{IC2}\rVert_{L^2(\Omega_s)}^2
+\lVert\mathcal{R}_{IC3}\rVert_{L^2(\Omega_f)}^2
\end{align}
together with the higher regularity results
\eqref{8C}, \eqref{eqw} and \eqref{equ} given in Theorem \ref{thm2.1} in \eqref{lcx2}, we take
\begin{align}
\lVert\hat{u}(t)\rVert_{L^2(\Omega_f)}^2
+\lVert\hat{w}_t(t)\rVert_{L^2(\Omega_s)}^2
+\lVert\hat{w}(t)\rVert_{H^1(\Omega_s)}^2
&\le \int_0^t \Big(
\lVert\hat{u}(s)\rVert_{L^2(\Omega_f)}^2
+\lVert\hat{w}_t(s)\rVert_{L^2(\Omega_s)}^2
+\lVert\hat{w}(s)\rVert_{H^1(\Omega_s)}^2
\Big)\,ds \nonumber\\
&+ C_{\mathcal{R}},
\label{36H}
\end{align}
where
\begin{align}
\label{C1def}
C_{\mathcal{R}} &:=
\lVert\mathcal{R}_{IC1}\rVert_{H^1(\Omega_s)}^2
+\lVert\mathcal{R}_{IC2}\rVert_{L^2(\Omega_s)}^2
+\lVert\mathcal{R}_{IC3}\rVert_{L^2(\Omega_f)}^2
+\lVert\mathcal{R}_{u}\rVert_{L^2(\Omega_f\times[0,T])}^{2}
+\lVert\mathcal{R}_{w}\rVert_{L^2(\Omega_s\times[0,T])}^{2}
\nonumber\\
&\quad
+C\lVert\mathcal{R}_{BC}\rVert_{L^2(\Gamma_f\times[0,T])}
+C\lVert\mathcal{R}_{IF1}\rVert_{L^2(\Gamma_s\times[0,T])}
+C\lVert\mathcal{R}_{IF2}\rVert_{L^2(\Gamma_s\times[0,T])}
\nonumber\\
&\quad
+\lVert\mathcal{R}_{IF1}\rVert_{L^2(\Gamma_s\times[0,T])}^{2}
+\lVert\mathcal{R}_{IF2}\rVert_{L^2(\Gamma_s\times[0,T])}^{2}.
\end{align}
\noindent Application of Gr\"{o}nwall's inequality to \eqref{36H} implies 
\begin{align}
\lVert \hat{u}(t)\rVert_{L^2(\Omega_f)}^{2}
+\lVert \hat{w}_t(t)\rVert_{L^2(\Omega_s)}^{2}
+\lVert \hat{w}(t)\rVert_{H^1(\Omega_s)}^{2}
\le C_{\mathcal{R}}(1+Te^{T}),
\label{31ES}
\end{align}
and integrating \eqref{31ES} over $[0,T]$ finally gives
\begin{align}
 \lVert \hat{u}\rVert_{L^2(\Omega_f\times[0,T])}^{2}
 +\lVert \hat{w}_t\rVert_{L^2(\Omega_s\times[0,T])}^{2}
 +\lVert \hat{w}\rVert_{H^1(\Omega_s\times[0,T])}^{2}
 \le C_{\mathcal{R}}\,(T+T^2e^{T}),
\end{align}
and this completes the proof of Theorem \ref{4.2S}.
\end{proof}
\noindent Having established an energy estimate for the $L^2$-error between the exact 
solution $(w,w_t,u)$ and the PINN approximation 
$(w_\theta,w_{\theta t},u_\theta)$, we now turn our attention to Question~\textbf{[Q2]}. In order to address this question, we will need to obtain 
bounds on the derivatives of the neural network. The following lemma, which provides those bounds, will be one of the key ingredients of the proof of our main result given in Theorem~\ref{Th 3.3A} below.
\begin{lemma}
\label{lemC1}
Let $d\in\{2,3\}$, $n,L,W\in\mathbb N$, $L\ge2$, and
$R\in[1,\infty)$. Let
$\pi_u:\mathbb R^{2d}\to\mathbb R^d$ and
$\pi_w:\mathbb R^{2d}\to\mathbb R^d$
denote the projections onto the first and last $d$ components,
respectively:
\begin{align*}
\pi_u(z_1,\dots,z_{2d})
&=
(z_1,\dots,z_d),
\\
\pi_w(z_1,\dots,z_{2d})
&=
(z_{d+1},\dots,z_{2d}).
\end{align*}
Assume $\sigma\in C^n(\mathbb R)$ and
$\lVert\sigma\rVert_{C^n(\mathbb R)}\ge1$.
Fix $l_0=d+1$ and $l_L=2d$, and let
$\theta\in\Theta_{L,W,R}(l_0,l_L)$
in Definition~\ref{NN}. Then for every component $1\le j\le2d$,
\begin{align}
\lVert(\varPhi_\theta)_j\rVert_{C^n(\mathbb R^{d+1})}
\le
16^L(d+1)^{2n}
\Big(
e^2n^4W^3R^n
\lVert\sigma\rVert_{C^n(\mathbb R)}
\Big)^{nL}.
\label{EqC1}
\end{align}
Consequently, defining
\begin{align*}
u_\theta
&:=
(\pi_u\circ\varPhi_\theta)
\big\lvert_{(0,T)\times\Omega_f},
\\
w_\theta
&:=
(\pi_w\circ\varPhi_\theta)
\big\rvert_{(0,T)\times\Omega_s},
\end{align*}
we have, for $1\le j\le d$,
\begin{align}
\lVert(u_\theta)_j\rVert_{C^n((0,T)\times\Omega_f)}
&\le
16^L(d+1)^{2n}
\Big(
e^2n^4W^3R^n
\lVert\sigma\rVert_{C^n(\mathbb R)}
\Big)^{nL},
\label{EqCU}\\
\lVert(w_\theta)_j\rVert_{C^n((0,T)\times\Omega_s)}
&\le
16^L(d+1)^{2n}
\Big(
e^2n^4W^3R^n
\lVert\sigma\rVert_{C^n(\mathbb R)}
\Big)^{nL}.
\label{EqCW}
\end{align}
\end{lemma}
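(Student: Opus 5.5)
The estimate \eqref{EqC1} is a $C^n$ bound for a composition of affine maps and coordinatewise activations. I would prove it by induction on the depth, using the multivariate Faà di Bruno formula at each step; this is the standard route in the PINN literature (e.g.\ the bounds of De Ryck--Mishra used in \cite{DE,DE1}). The restrictions \eqref{EqCU}--\eqref{EqCW} then follow at once. Each $(u_\theta)_j$ and $(w_\theta)_j$ is a component $(\varPhi_\theta)_k$ of the same network, restricted to $(0,T)\times\Omega_f$ or $(0,T)\times\Omega_s$. Restriction to a subset cannot increase a $C^n$ norm, which is a supremum of derivatives up to order $n$.

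For the induction, let $g_k:=f_k^\theta\circ\cdots\circ f_1^\theta:\mathbb R^{d+1}\to\mathbb R^{l_k}$, and define
\[
B_k:=\max_{i}\lVert (g_k)_i\rVert_{C^n(\mathbb R^{d+1})}.
\]
In the base case, $(g_1)_i=\sigma\bigl(\sum_{m}(W_1)_{im}z_m+(b_1)_i\bigr)$. By the chain rule, a multi-index derivative of order $|\alpha|\le n$ equals $\sigma^{(|\alpha|)}$ times a product of $|\alpha|$ weights, each bounded by $R$. Hence $B_1\le \lVert\sigma\rVert_{C^n}R^n$. To collect derivatives of all multi-indices I would allow a factor $(d+1)^{n}$, which accounts for the $(d+1)^{2n}$ prefactor.

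In the inductive step, write $(g_{k+1})_i=\sigma(h_i)$ with $h_i=\sum_{m=1}^{l_k}(W_{k+1})_{im}(g_k)_m+(b_{k+1})_i$. Then $\lVert h_i\rVert_{C^n}\le WR\,B_k+R\le 2WR\,B_k$, using $B_k\ge 1$ and $R\ge1$. Faà di Bruno's formula expresses $\partial^\alpha\sigma(h_i)$ as a sum, over set partitions of the derivative indices, of $\sigma^{(m)}(h_i)$ times products of $m$ derivatives of $h_i$. There are at most Bell number $B_n\le n^n$ partitions, and each product is bounded by $\max(1,\lVert h_i\rVert_{C^n})^n$. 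This gives
\[
B_{k+1}\le n^n\lVert\sigma\rVert_{C^n}(2WR B_k)^n .
\]
The last layer $A_L^\theta$ is affine, so it only contributes a factor $2WR$.

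Unwinding the recursion gives $B_k\lesssim(\text{const}\cdot n^n W^n R^n\lVert\sigma\rVert_{C^n})^{n^{k}}$, which grows doubly exponentially in the depth. This is the step I expect to be the main obstacle: the claimed bound has exponent only $nL$, so the crude recursion $B_{k+1}\le C B_k^n$ is too lossy. To obtain \eqref{EqC1} I would instead track derivatives of order exactly $m$ separately for $m=1,\dots,n$. Write $D_{k,m}$ for the bound on all $m$-th order derivatives of $g_k$. The key observation is that in Faà di Bruno, with $\sigma$ bounded in $C^n$, the $m$-th derivative of $\sigma\circ h$ is a sum of products whose derivative orders add up to $m$. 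This yields
\[
D_{k+1,m}\le \lVert\sigma\rVert_{C^n}\sum_{\pi}\prod_{S\in\pi}WR\,D_{k,|S|}.
\]
Under the inductive ansatz $D_{k,m}\le (a_k)^{m}$ with $a_k\ge1$, the total homogeneity is $m$. One obtains $D_{k+1,m}\le n^n\lVert\sigma\rVert_{C^n}(WR\,a_k)^m$, hence $a_{k+1}\le e^2n^4W^3R^n\lVert\sigma\rVert_{C^n}\,a_k$ after generously absorbing the combinatorial constants, using $(n^n)^{1/m}\le n^n$ and $m\le n$. This step amounts to checking that the Bell-number and width sums fit inside the factor $e^2n^4W^3$ and the $16^L$ prefactor.

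The ansatz therefore gives geometric growth $a_L\le(e^2n^4W^3R^n\lVert\sigma\rVert_{C^n})^{L}$. Raising to the power $m\le n$ produces the exponent $nL$ in \eqref{EqC1}. The $16^L$ factor absorbs the extra constants coming from the biases and the zeroth-order terms at each layer, and $(d+1)^{2n}$ absorbs the counting over multi-indices in $d+1$ input variables.
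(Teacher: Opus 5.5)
Your argument is correct, but it gets geometric growth in $L$ by a different mechanism from the paper's. The paper never runs your forward recursion $g_{k+1}=f^\theta_{k+1}\circ g_k$. Instead it composes from the outside in, $F_k=F_{k+1}\circ f_k^\theta$ with $F_k:=f_L^\theta\circ\cdots\circ f_k^\theta$, and quotes the composition estimate of Lemma~\ref{x2}, $\lVert g\circ f\rVert_{C^n}\le 16(e^2n^4md^2)^n\lVert g\rVert_{C^n}\max_i\lVert f_i\rVert_{C^n}^n$. The $n$-th power falls on the inner map, which is a single layer with $\lVert(f_k^\theta)_i\rVert_{C^n}\le R^n\lVert\sigma\rVert_{C^n}$, so the recursion is linear in $\lVert F_{k+1}\rVert_{C^n}$. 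The last two layers are bounded directly by $2WR^{n+1}\lVert\sigma\rVert_{C^n}$. The per-layer factors $16(e^2n^4l_kl_{k-1}^2)^n(R^n\lVert\sigma\rVert_{C^n})^n$ then produce the $16^L$, the $(d+1)^{2n}$ (from $l_0=d+1$) and the $W^3$. The doubly exponential blow-up you diagnose is exactly what the same lemma gives when applied in your forward order. Your remedy is to grade the bounds by derivative order and exploit the homogeneity of the Fa\`a di Bruno sum through the ansatz $D_{k,m}\le a_k^m$. It is self-contained and noticeably sharper: in the final bound each layer costs roughly $(nWR\lVert\sigma\rVert_{C^n})^n$ instead of $16(e^2n^4W^3R^n\lVert\sigma\rVert_{C^n})^n$, and no $(d+1)^{2n}$ appears. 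The price is doing the combinatorics yourself rather than citing a lemma.

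One correction to your bookkeeping. Absorbing via ``$(n^n)^{1/m}\le n^n$'' leaves a per-layer factor $n^nWR\lVert\sigma\rVert_{C^n}$, which does not fit under $e^2n^4W^3R^n\lVert\sigma\rVert_{C^n}$ once $n\ge6$ and $W=R=1$. Take the $m$-th root sharply instead. The Bell number satisfies $B_m\le m^m$, so $(B_m\lVert\sigma\rVert_{C^n})^{1/m}\le n\lVert\sigma\rVert_{C^n}$, and then $a_1:=R\lVert\sigma\rVert_{C^n}$ and $a_{k+1}:=nWR\lVert\sigma\rVert_{C^n}a_k$ work. Also state the ansatz only for $1\le m\le n$. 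The sup norm of $g_k$ never enters the recursion for $m\ge1$, and at the output it is at most $WR\lVert\sigma\rVert_{C^0}+R$, which is harmless.
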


\begin{proof}
For $1\le k\le L$, define the composition
$F_k:=f_L^\theta\circ f_{L-1}^\theta\circ\cdots\circ f_k^\theta,$ where the functions $f_k^\theta:
\mathbb R^{l_{k-1}}
\longrightarrow
\mathbb R^{l_k},
\,
1\le k\le L$ are the layer maps introduced in Definition \ref{NN}.
It is clear that $F_1=\varPhi_\theta$ and $F_L=f_L^\theta$. Hence, by definition of $F_k:
\mathbb R^{l_{k-1}}
\longrightarrow
\mathbb R^{l_L},$
and
$F_{k+1}:
\mathbb R^{l_k}
\longrightarrow
\mathbb R^{l_L},
$
\begin{align}
F_k
=
F_{k+1}\circ f_k^\theta.
\label{Fk2}
\end{align}
Therefore, applying Lemma~ \ref{x2} in Appendix
 to \eqref{Fk2}, for each
$1\le k\le L-2$, yields
\begin{equation}\label{eqC3}
\lVert F_k\rVert_{C^n(\mathbb R^{l_{k-1}})}
\le
16
\Big(
e^2n^4l_kl_{k-1}^2
\Big)^n
\Big(
\max_{1\le i\le l_k}
\lVert(f_k^\theta)_i
\rVert_{C^n(\mathbb R^{l_{k-1}})}
\Big)^n
\lVert F_{k+1}\rVert_{C^n(\mathbb R^{l_k})}.
\end{equation}
Also, since Definition~\ref{NN} gives
$f_k^\theta=\sigma\circ A_k^\theta$ $(1\le k<L)$ then for every
$1\le i\le l_k$,
\begin{equation}\label{eqlay}
\lVert(f_k^\theta)_i
\rVert_{C^n(\mathbb R^{l_{k-1}})}
\le
R^n\lVert\sigma\rVert_{C^n(\mathbb R)}.
\end{equation}
Now, it remains to estimate the last two layers. By definition
of $F_{L-1}$,
\begin{align}
F_{L-1}
=
f_L^\theta\circ f_{L-1}^\theta.
\label{FL1}
\end{align}
Since Definition~\ref{NN} gives
$f_L^\theta=A_L^\theta$ and
$A_L^\theta(z)=W_Lz+b_L$, for every
$1\le j\le l_L$ and $x\in\mathbb R^{l_{L-2}}$, we have
\begin{align}
(F_{L-1})_j(x)
&=
(f_L^\theta)_j
\big(f_{L-1}^\theta(x)\big)
\nonumber\\
&=
\big(
W_Lf_{L-1}^\theta(x)+b_L
\big)_j
\nonumber\\
&=
\sum_{i=1}^{l_{L-1}}
(W_L)_{ji}
(f_{L-1}^\theta)_i(x)
+
(b_L)_j.
\label{FL2}
\end{align}
Since $\theta\in\Theta_{L,W,R}(l_0,l_L)$, Definition~\ref{NN}
also implies
\begin{align}
\lvert(W_L)_{ji}\rvert
\le R,
\qquad
\lvert(b_L)_j\rvert
\le R,
\qquad
l_{L-1}\le W
\label{FL3}
\end{align}
and
$f_{L-1}^\theta=\sigma\circ A_{L-1}^\theta$ then for every
$x\in\mathbb R^{l_{L-2}}$ and $1\le i\le l_{L-1}$,
\begin{align}
\lvert(f_{L-1}^\theta)_i(x)\rvert
\le
\lVert\sigma\rVert_{C^0(\mathbb R)}.
\label{FL4}
\end{align}
Therefore, using \eqref{FL2}-\eqref{FL4}, we get
\begin{align}
\lvert(F_{L-1})_j(x)\rvert
&\le
\sum_{i=1}^{l_{L-1}}
\lvert(W_L)_{ji}\rvert
\lvert(f_{L-1}^\theta)_i(x)\rvert
+
\lvert(b_L)_j\rvert
\nonumber\\
&\le
Rl_{L-1}
\lVert\sigma\rVert_{C^0(\mathbb R)}
+
R\le
RW\lVert\sigma\rVert_{C^0(\mathbb R)}
+
R.
\label{FL5}
\end{align}
Taking the supremum over $x\in\mathbb R^{l_{L-2}}$ in
\eqref{FL5} gives
\begin{align}
\lVert(F_{L-1})_j
\rVert_{C^0(\mathbb R^{l_{L-2}})}
\le
RW\lVert\sigma\rVert_{C^0(\mathbb R)}
+
R.
\label{FL6}
\end{align}
Next, let $\alpha$ be a multi-index satisfying
$1\le\lvert\alpha\rvert\le n$. Differentiating \eqref{FL2} yields
\begin{align}
D^\alpha(F_{L-1})_j
=
\sum_{i=1}^{l_{L-1}}
(W_L)_{ji}
D^\alpha(f_{L-1}^\theta)_i.
\label{FL7}
\end{align}
Consequently, using \eqref{FL3} and then \eqref{eqlay} with
$k=L-1$, it follows from \eqref{FL7} that
\begin{align}
\lVert D^\alpha(F_{L-1})_j
\rVert_{C^0(\mathbb R^{l_{L-2}})}
&\le
Rl_{L-1}
\max_{1\le i\le l_{L-1}}
\lVert
D^\alpha(f_{L-1}^\theta)_i
\rVert_{C^0(\mathbb R^{l_{L-2}})}
\nonumber\\
&\le
RW
\max_{1\le i\le l_{L-1}}
\lVert
(f_{L-1}^\theta)_i
\rVert_{C^n(\mathbb R^{l_{L-2}})}
\le
WR^{n+1}
\lVert\sigma\rVert_{C^n(\mathbb R)}.
\label{FL8}
\end{align}
Since
$\lVert\sigma\rVert_{C^0(\mathbb R)}
\le
\lVert\sigma\rVert_{C^n(\mathbb R)}$
and
$W,R,\lVert\sigma\rVert_{C^n(\mathbb R)}\ge1$, it follows from
\eqref{FL6} that
\begin{align}
\lVert(F_{L-1})_j
\rVert_{C^0(\mathbb R^{l_{L-2}})}
&\le
RW\lVert\sigma\rVert_{C^n(\mathbb R)}
+
R
\nonumber\\
&\le
RW\lVert\sigma\rVert_{C^n(\mathbb R)}
+
WR^{n+1}\lVert\sigma\rVert_{C^n(\mathbb R)}
\le
2WR^{n+1}
\lVert\sigma\rVert_{C^n(\mathbb R)}.
\label{FL9}
\end{align}
Moreover, \eqref{FL8} gives, for every
$1\le\lvert\alpha\rvert\le n$,
\begin{align}
\lVert D^\alpha(F_{L-1})_j
\rVert_{C^0(\mathbb R^{l_{L-2}})}
\le
WR^{n+1}
\lVert\sigma\rVert_{C^n(\mathbb R)}
\le
2WR^{n+1}
\lVert\sigma\rVert_{C^n(\mathbb R)}.
\label{FL10}
\end{align}
Combining \eqref{FL9} and \eqref{FL10}, and then taking the maximum
over $1\le j\le l_L$, we obtain
\begin{equation}\label{eqlas}
\lVert F_{L-1}\rVert_{C^n(\mathbb R^{l_{L-2}})}
\le
2WR^{n+1}
\lVert\sigma\rVert_{C^n(\mathbb R)}.
\end{equation}
For $L\ge3$, repeatedly applying \eqref{eqC3} for
$k=1,2,\dots,L-2$ gives
\begin{equation}\label{it}
\lVert F_1\rVert_{C^n(\mathbb R^{d+1})}
\le
\lVert F_{L-1}\rVert_{C^n(\mathbb R^{l_{L-2}})}
\prod_{k=1}^{L-2}
\left[
16
\Big(
e^2n^4l_kl_{k-1}^2
\Big)^n
\Big(
\max_{1\le i\le l_k}
\lVert(f_k^\theta)_i
\rVert_{C^n(\mathbb R^{l_{k-1}})}
\Big)^n
\right].
\end{equation}
Since $F_1=\varPhi_\theta$, inserting \eqref{eqlay} and
\eqref{eqlas} into \eqref{it} yields
\begin{equation}\label{eqaf}
\lVert\varPhi_\theta\rVert_{C^n(\mathbb R^{d+1})}
\le
2WR^{n+1}
\lVert\sigma\rVert_{C^n(\mathbb R)}
\prod_{k=1}^{L-2}
16
\Big(
e^2n^4l_kl_{k-1}^2
\Big)^n
\Big(
R^n\lVert\sigma\rVert_{C^n(\mathbb R)}
\Big)^n.
\end{equation}
Here $l_0=d+1$ and
$l_k\le W$ for every $1\le k\le L-1$ by Definition~\ref{NN}. In particular,
\begin{align}
l_1l_0^2
\le
W(d+1)^2,
\qquad
l_kl_{k-1}^2
\le
W^3,
\quad
2\le k\le L-2.
\label{LW1}
\end{align}
Therefore, for $L\ge3$,
\begin{align}
\prod_{k=1}^{L-2}
(l_kl_{k-1}^2)^n
&\le
\big(W(d+1)^2\big)^n
(W^3)^{n(L-3)}
\nonumber\\
&=
(d+1)^{2n}
W^n
(W^3)^{n(L-3)}
\le
(d+1)^{2n}
(W^3)^{n(L-2)},
\label{eqwid}
\end{align}
where the last inequality follows from the fact that $W\ge1$. Inserting \eqref{eqwid} into \eqref{eqaf}  gives
\begin{equation}\label{U-bound}
\lVert\varPhi_\theta\rVert_{C^n(\mathbb R^{d+1})}
\le
2WR^{n+1}
\lVert\sigma\rVert_{C^n(\mathbb R)}
(d+1)^{2n}
\Big[
16
\Big(
e^2n^4W^3R^n
\lVert\sigma\rVert_{C^n(\mathbb R)}
\Big)^n
\Big]^{L-2}.
\end{equation}
Since again
$n,W,R,\lVert\sigma\rVert_{C^n(\mathbb R)}\ge1$, we have
\begin{align}
2WR^{n+1}
\lVert\sigma\rVert_{C^n(\mathbb R)}
\le
16^2
\Big(
e^2n^4W^3R^n
\lVert\sigma\rVert_{C^n(\mathbb R)}
\Big)^{2n}.
\label{RU1}
\end{align}
Substituting \eqref{RU1} into \eqref{U-bound}, we obtain
\begin{align}
\lVert\varPhi_\theta\rVert_{C^n(\mathbb R^{d+1})}
&\le
16^2 
\Big(
e^2n^4W^3R^n
\lVert\sigma\rVert_{C^n(\mathbb R)}
\Big)^{2n}
(d+1)^{2n}
\Big[
16
\Big(
e^2n^4W^3R^n
\lVert\sigma\rVert_{C^n(\mathbb R)}
\Big)^n
\Big]^{L-2}
\nonumber\\
&=
16^L(d+1)^{2n}
\Big(
e^2n^4W^3R^n
\lVert\sigma\rVert_{C^n(\mathbb R)}
\Big)^{nL}.
\label{EqU}
\end{align}
For $L=2$, we have $F_{L-1}=F_1=\varPhi_\theta$ and
$l_{L-2}=l_0=d+1$. Therefore, \eqref{eqlas} gives
\begin{align}
\lVert\varPhi_\theta\rVert_{C^n(\mathbb R^{d+1})}
\le
2WR^{n+1}
\lVert\sigma\rVert_{C^n(\mathbb R)}.
\label{L21}
\end{align}
Using \eqref{RU1} and $(d+1)^{2n}\ge1$ in \eqref{L21}, we obtain
\begin{align}
\lVert\varPhi_\theta\rVert_{C^n(\mathbb R^{d+1})}
&\le
16^2(d+1)^{2n}
\Big(
e^2n^4W^3R^n
\lVert\sigma\rVert_{C^n(\mathbb R)}
\Big)^{2n}.
\label{L22}
\end{align}
Thus, \eqref{EqU} holds for every $L\ge2$. Since
\[
\lVert\varPhi_\theta\rVert_{C^n(\mathbb R^{d+1})}
:=
\max_{1\le j\le2d}
\lVert(\varPhi_\theta)_j\rVert_{C^n(\mathbb R^{d+1})},
\]
using \eqref{EqU} gives
\begin{align}
\lVert(\varPhi_\theta)_j\rVert_{C^n(\mathbb R^{d+1})}
&\le
16^L(d+1)^{2n}
\Big(
e^2n^4W^3R^n
\lVert\sigma\rVert_{C^n(\mathbb R)}
\Big)^{nL},
\qquad
1\le j\le2d,
\label{Eq0C1}
\end{align}
which proves \eqref{EqC1}.\\

\noindent
Now, it remains to prove \eqref{EqCU} and \eqref{EqCW}. By the definitions
of $\pi_u$ and $\pi_w$, for every\\ $1\le j\le d$,
$(u_\theta)_j
=
(\varPhi_\theta)_j\big\rvert_{(0,T)\times\Omega_f}$
and
$(w_\theta)_j
=
(\varPhi_\theta)_{d+j}\big\rvert_{(0,T)\times\Omega_s}$.
Therefore,
$$\lVert(u_\theta)_j\rVert_{C^n((0,T)\times\Omega_f)}
\le
\lVert(\varPhi_\theta)_j\rVert_{C^n(\mathbb R^{d+1})}.$$
Applying \eqref{Eq0C1} with component index $j$ gives
\begin{align}
\lVert(u_\theta)_j\rVert_{C^n((0,T)\times\Omega_f)}
&\le
16^L(d+1)^{2n}
\Big(
e^2n^4W^3R^n
\lVert\sigma\rVert_{C^n(\mathbb R)}
\Big)^{nL},
\qquad
1\le j\le d.
\label{53B0}
\end{align}
Similarly, since
$\lVert(w_\theta)_j\rVert_{C^n((0,T)\times\Omega_s)}
\le
\lVert(\varPhi_\theta)_{d+j}\rVert_{C^n(\mathbb R^{d+1})},$ and $d+1\le d+j\le2d$, applying
\eqref{Eq0C1} with component index $d+j$ gives
\begin{align}
\lVert(w_\theta)_j\rVert_{C^n((0,T)\times\Omega_s)}
&\le
16^L(d+1)^{2n}
\Big(
e^2n^4W^3R^n
\lVert\sigma\rVert_{C^n(\mathbb R)}
\Big)^{nL},
\qquad
1\le j\le d.
\label{530K}
\end{align}
Therefore, combining \eqref{Eq0C1}, \eqref{53B0}, and \eqref{530K} completes
the proof of Lemma \ref{lemC1}.
\end{proof}
\noindent
Our next goal now is to give an affirmative answer to
Question~\textbf{[Q2]} raised in Section 1, and finalize our error analysis. To this end, we introduce the generalization and training error functions whose difference determines the $L^2$ error bounds. We begin with the generalization error, which quantifies
the residuals over the full space-time domains, boundaries, and interface:

\begin{definition}[Generalization Error]
\noindent \noindent
\noindent
For the coupled heat-wave FSI model \eqref{1.0}, the generalization error is defined via the PINN residuals
in \eqref{resi}-\eqref{resIC} as follows:
\begin{equation}
\begin{aligned}
E_G(\theta)^2
:&={}
\int_0^T\!\!\int_{\Omega_f}\big\lVert \mathcal{R}_u[u_\theta](t,x)\big\rVert^2_{\mathbb{R}^d}\,dx\,dt
+\int_0^T\!\!\int_{\Omega_s}\big\lVert \mathcal{R}_w[w_\theta](t,x)\big\rVert^2_{\mathbb{R}^d}\,dx\,dt
\\
&\quad
+\int_0^T\!\!\int_{\Gamma_f}\big\lVert \mathcal{R}_{BC}[u_\theta](t,x)\big\rVert_{\mathbb{R}^d}^2\,ds(x)\,dt
+\int_0^T\!\!\int_{\Gamma_s}\big\lVert \mathcal{R}_{\rm IF1}[u_\theta,w_\theta](t,x)\big\rVert_{\mathbb{R}^d}^2\,ds(x)\,dt
\\
&\quad
+\int_0^T\!\!\int_{\Gamma_s}\big\lVert \mathcal{R}_{\rm IF2}[u_\theta,w_\theta](t,x)\big\rVert^2_{\mathbb{R}^d}\,ds(x)\,dt
+\int_{\Omega_s}
\left(
\big\lVert \mathcal{R}_{IC1}[w_\theta](x)\big\rVert^2_{\mathbb{R}^d}
+
\big\lVert \nabla \mathcal{R}_{IC1}[w_\theta](x)\big\rVert^2_{\mathbb{R}^{d\times d}}
\right)\,dx
\\
&\quad
+\int_{\Omega_s}\big\lVert \mathcal{R}_{IC2}[w_\theta](x)\big\rVert^2_{\mathbb{R}^d}\,dx+\int_{\Omega_f}\big\lVert \mathcal{R}_{IC3}[u_\theta](x)\big\rVert^2_{\mathbb{R}^d}\,dx,
\label{eq:EG}
\end{aligned}
\end{equation}

\noindent
where the functional $E_G(\theta)$ measures how well $(w_\theta, w_{\theta t},  u_\theta)$ satisfies the interior PDEs, the boundary condition on $\Gamma_f$, the interface conditions on $\Gamma_s$, and the initial conditions, with
$\lVert\cdot\rVert_{\mathbb R^d}$ denoting the Euclidean norm.
\end{definition}
\noindent Since these residual integrals cannot be evaluated exactly in practice, they
are approximated by finite quadrature sums. In particular, they are approximated by numerical quadrature. Now, assume that \[
S=\big(S_u,S_w,S_{BC},S_{IF1},S_{IF2},S_{IC1},S_{IC2},S_{IC3}\big)
\]
denote the corresponding quadrature nodes and weights. Then, we define the training error function as follows:

\begin{definition}[Training Error]
The resulting training
error is defined as 
\begin{align}
E_T(\theta,S)^2
:={}&
\sum_{n=1}^{N_u} \omega^u_n
\,\bigl\lVert \mathcal{R}_u[u_\theta](t^u_n,x^u_n)\bigr\rVert_{\mathbb{R}^d}^2
+\sum_{n=1}^{N_w} \omega^w_n
\,\bigl\lVert \mathcal{R}_w[w_\theta](t^w_n,x^w_n)\bigr\rVert_{\mathbb{R}^d}^2
+\sum_{n=1}^{N_{BC}} \omega^{BC}_n
\,\bigl\lVert \mathcal{R}_{BC}[u_\theta](t^{BC}_n,y^{BC}_n)\bigr\rVert_{\mathbb{R}^d}^2
\nonumber\\
&\quad
+\sum_{n=1}^{N_{IF}} \omega^{IF}_n
\,\bigl\lVert \mathcal{R}_{\rm IF1}[u_\theta,w_\theta](t^{IF}_n,x^{IF}_n)\bigr\rVert_{\mathbb{R}^d}^2
+\sum_{n=1}^{N_{IF}} \omega^{IF}_n
\,\bigl\lVert \mathcal{R}_{\rm IF2}[u_\theta,w_\theta](t^{IF}_n,x^{IF}_n)\bigr\rVert_{\mathbb{R}^d}^2
\nonumber\\
&\quad
+\sum_{n=1}^{N_{IC}} \omega^{IC}_n
\left(
\bigl\lVert \mathcal{R}_{IC1}[w_\theta](x^{IC}_n)\bigr\rVert_{\mathbb{R}^d}^2
+
\bigl\lVert \nabla\mathcal{R}_{IC1}[w_\theta](x^{IC}_n)\bigr\rVert_{\mathbb{R}^{d\times d}}^2
\right)
+\sum_{n=1}^{N_{IC}} \omega^{IC}_n
\,\bigl\lVert \mathcal{R}_{IC2}[w_\theta](x^{IC}_n)\bigr\rVert_{\mathbb{R}^d}^2
\nonumber\\
&\quad
+\sum_{n=1}^{N_{IC}} \omega^{IC}_n
\,\bigl\lVert \mathcal{R}_{IC3}[u_\theta](x^{IC}_n)\bigr\rVert_{\mathbb{R}^d}^2,
\label{ET}
\end{align}

\noindent
where $N_u,N_w,N_{BC},N_{IF}$, and $N_{IC}$ denote the numbers of
quadrature nodes on the corresponding fluid interior, solid interior,
outer boundary, interface, and initial domains, respectively, with associated
weights $\omega_n^u,\omega_n^w,\omega_n^{BC},\omega_n^{IF}$, and
$\omega_n^{IC}$. 
\medskip

\noindent Moreover, the trained PINN is determined by minimizing the training error:
\[
\theta^*(S)\in\arg\min_{\theta\in\Theta}E_T(\theta,S)^2.
\]
\end{definition}

\noindent
In what follows, a decomposition of the training error in \eqref{ET} in terms of inner, boundary and initial training errors will often be used. Hence, we describe this decomposition here:
\begin{align}
E_T(\theta,S)^2
={}&
E_{PDE,T}(\theta,S_{PDE})^2
+E_{BC,T}(\theta,S_{BC})^2
+E_{IF,T}(\theta,S_{IF})^2
+E_{IC,T}(\theta,S_{IC})^2,
\label{56FR}
\end{align}
where $E_{PDE,T}$, $E_{BC,T}$, $E_{IF,T}$, and $E_{IC,T}$
denote the PDE, outer boundary, interface, and initial 
training errors, respectively. More precisely,
\begin{align}
&E_{PDE,T}(\theta,S_{PDE})^2
:={}
\sum_{n=1}^{N_u} \omega^u_n
\,\bigl\lVert \mathcal{R}_u[u_\theta](t^u_n,x^u_n)\bigr\rVert_{\mathbb{R}^d}^2
+
\sum_{n=1}^{N_w} \omega^w_n
\,\bigl\lVert \mathcal{R}_w[w_\theta](t^w_n,x^w_n)\bigr\rVert_{\mathbb{R}^d}^2,
\label{sde4}
\\
&E_{BC,T}(\theta,S_{BC})^2
:={}
\sum_{n=1}^{N_{BC}} \omega^{BC}_n
\,\bigl\lVert \mathcal{R}_{BC}[u_\theta](t^{BC}_n,y^{BC}_n)\bigr\rVert_{\mathbb{R}^d}^2,
\\
&E_{IF,T}(\theta,S_{IF})^2
:={}
\sum_{n=1}^{N_{IF}}\Big( \omega^{IF}_n
\,\bigl\lVert \mathcal{R}_{\rm IF1}[u_\theta,w_\theta](t^{IF}_n,x^{IF}_n)\bigr\rVert_{\mathbb{R}^d}^2
+
\,\bigl\lVert \mathcal{R}_{\rm IF2}[u_\theta,w_\theta](t^{IF}_n,x^{IF}_n)\bigr\rVert_{\mathbb{R}^d}^2\Big),
\\
&E_{IC,T}(\theta,S_{IC})^2
:={}
\sum_{n=1}^{N_{IC}} \omega^{IC}_n
\Big(
\bigl\lVert \mathcal{R}_{IC1}[w_\theta](x^{IC}_n)\bigr\rVert_{\mathbb{R}^d}^2
+
\bigl\lVert \nabla\mathcal{R}_{IC1}[w_\theta](x^{IC}_n)\bigr\rVert_{\mathbb{R}^{d\times d}}^2
\\
&\hspace{30mm}
+
\bigl\lVert \mathcal{R}_{IC2}[w_\theta](x^{IC}_n)\bigr\rVert_{\mathbb{R}^d}^2
+
\bigl\lVert \mathcal{R}_{IC3}[u_\theta](x^{IC}_n)\bigr\rVert_{\mathbb{R}^d}^2
\Big).
\label{50JHK}
\end{align}
For its numerical evaluation, we use the midpoint quadrature rules introduced
in Section~\ref{quad}.
In the coupled setting $\Omega=\Omega_f\cup\Omega_s$, we denote
the PDE, outer boundary, interface, and initial quadratures by
$Q_{M_{PDE}}^{PDE}$, $Q_{M_{BC}}^{BC}$, $Q_{M_{IF}}^{IF}$, and
$Q_{M_{IC}}^{IC}$, respectively, and their corresponding numbers
of quadrature points by $M_{PDE}$, $M_{BC}$, $M_{IF}$, and $M_{IC}$.\\

\noindent Hence, we can write \eqref{sde4}-\eqref{50JHK}  in a compact manner as follows:
\begin{align}
&E_{PDE,T}(\theta,S_{PDE})^2
=
Q_{M_{PDE}}^{PDE}
\Big[
\lvert\mathcal R_u\rvert^2
+
\lvert\mathcal R_w\rvert^2
\Big],
\label{3F1}
\\
&E_{BC,T}(\theta,S_{BC})^2
=
Q_{M_{BC}}^{BC}
\Big[
\lvert\mathcal R_{BC}\rvert^2
\Big],
\label{3F2}
\\
&E_{IF,T}(\theta,S_{IF})^2
=
Q_{M_{IF}}^{IF}
\Big[
\lvert\mathcal R_{IF1}\rvert^2
+
\lvert\mathcal R_{IF2}\rvert^2
\Big],
\label{3F3}
\\
&E_{IC,T}(\theta,S_{IC})^2
=
Q_{M_{IC}}^{IC}
\Big[
\lvert\mathcal R_{IC1}\rvert^2
+
\lvert\nabla\mathcal R_{IC1}\rvert^2
+
\lvert\mathcal R_{IC2}\rvert^2
+
\lvert\mathcal R_{IC3}\rvert^2
\Big].
\label{ET1}
\end{align}
\noindent
Next, we address Question~\textbf{[Q2]} raised in the introduction. Using the generalization error defined in \eqref{eq:EG}, the decomposition of the training error in \eqref{56FR} together with \eqref{3F1}-\eqref{ET1}, and applying the quadrature estimate \eqref{quad_err} to the corresponding residual terms, we obtain
\begin{align}
\bigl|E_G(\theta^*(S))^2-E_T(\theta^*(S),S)^2\bigr|
\le{}&
C_{\mathrm{IC}}M_{\mathrm{IC}}^{-1/d}
+
C_{\mathrm{PDE}}M_{\mathrm{PDE}}^{-1/(d+1)}
\nonumber\\
&+
C_{\mathrm{BC}}M_{\mathrm{BC}}^{-1/d}
+
C_{\mathrm{IF}}M_{\mathrm{IF}}^{-1/d},
\label{41EW}
\end{align}
which provides an affirmative answer to Question~\textbf{[Q2]}. We next derive an estimate for the total $L^2$-error of the trained PINN in terms of the training error and the number of quadrature points.
\medskip

\noindent
We now state the corresponding error estimate in the following theorem.

\begin{theorem}
\label{Th 3.3A}
Let $T>0$ and $d\in\{2,3\}$. Assume that the solution $(w,w_t,u)$ of the PDE system \eqref{1.0}
satisfies
$u\in C^4(\overline{\Omega}_f\times[0,T])$ and
$w\in C^4(\overline{\Omega}_s\times[0,T])$.
Let $(w_\theta,w_{\theta t},u_\theta)$ be a PINN solution with parameters
$\theta\in\Theta_{L,W,R}$, and define
$\hat u:=u_\theta-u$,  $\hat w:=w_\theta-w$, and $\hat{w}_t=w_{\theta t}-w_t$.
Then
\begin{align}
&
\lVert\hat u\rVert_{L^2((0,T)\times\Omega_f)}^2
+\lVert\partial_t\hat w\rVert_{L^2((0,T)\times\Omega_s)}^2
+\lVert\hat w\rVert_{L^2(0,T;H^1(\Omega_s))}^2
\nonumber\\
&\qquad=
O\!\left(
E_T(\theta,S)
+E_T(\theta,S)^2
+M_{IC}^{-1/d}
+M_{PDE}^{-1/(d+1)}
+M_{BC}^{-1/(2d)}
+M_{IF}^{-1/(2d)}
\right),
\label{eqdz}
\end{align}
where $E_T(\theta,S)$ is the training error defined in \eqref{ET} (or \eqref{56FR}), and
$M_{IC}$, $M_{PDE}$, $M_{BC}$, and $M_{IF}$ are the numbers of the quadrature points of initial, PDE, outer boundary and interface quadratures, respectively.
\end{theorem}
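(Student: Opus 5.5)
The plan is to combine the stability estimate of Theorem~\ref{4.2S} with the quadrature bound \eqref{quad_err}, using Lemma~\ref{lemC1} to control the $C^2$ norms of the residuals that enter the quadrature constants.

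\textbf{Step 1: reduce to residual norms.} By Theorem~\ref{4.2S}, the left-hand side of \eqref{eqdz} is at most $C_{\mathcal R}(T+T^2e^T)$. It therefore suffices to bound $C_{\mathcal R}$ in \eqref{aslm}. Its terms fall into two types.
\begin{itemize}
\item[(a)] Squared $L^2$-norms: the interior, initial and interface residuals, with $\lVert\mathcal R_{IC1}\rVert_{H^1}^2$ counted through both $\mathcal R_{IC1}$ and $\nabla\mathcal R_{IC1}$.
\item[(b)] Unsquared norms $\lVert\mathcal R_{BC}\rVert_{L^2}$, $\lVert\mathcal R_{IF1}\rVert_{L^2}$ and $\lVert\mathcal R_{IF2}\rVert_{L^2}$ on $\Gamma_f\times[0,T]$ and $\Gamma_s\times[0,T]$. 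These came from the trace terms multiplied by $\lVert\hat u\rVert_{L^2(H^2)}$, $\lVert\hat w\rVert_{L^2(H^2)}$ and $\lVert\hat w_t\rVert_{L^2(H^1)}$.
\end{itemize}
The factors multiplying (b) are bounded by a constant. The exact solution contributes via the $C^4$ assumption, which is stronger than Theorem~\ref{thm2.1}. The network contributes via Lemma~\ref{lemC1} with $n=4$ (or $n=2$), which bounds $\lVert u_\theta\rVert_{C^2}$ and $\lVert w_\theta\rVert_{C^2}$ in terms of $L,W,R,\sigma$ only. All these constants are absorbed into the $O(\cdot)$.

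\textbf{Step 2: compare each residual integral with its quadrature.} Every type (a) quantity is one of the pieces of $E_G(\theta)^2$ in \eqref{eq:EG}. Applying \eqref{quad_err} to $g=\lvert\mathcal R_\ast\rvert^2$ on each domain gives
\[
\int_\Lambda \lvert\mathcal R_\ast\rvert^2 \le Q_M^\Lambda\bigl[\lvert\mathcal R_\ast\rvert^2\bigr] + C\,\bigl\lVert \lvert\mathcal R_\ast\rvert^2\bigr\rVert_{C^2(\Lambda)}\, M^{-1/\dim\Lambda}.
\]
The quadrature sum is a piece of $E_T(\theta,S)^2$ by \eqref{3F1}--\eqref{ET1}. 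The $C^2$ norm of $\lvert\mathcal R_\ast\rvert^2$ is bounded by products of $C^2$ norms of the residuals:
\begin{itemize}
\item $\mathcal R_u$ and $\mathcal R_w$ involve second derivatives of the network, so their $C^2$ norms need fourth derivatives. This is exactly why $\sigma\in C^4$ and $n=4$ in Lemma~\ref{lemC1} are needed, together with $u,w\in C^4$.
\item $\mathcal R_{IF2}$ involves first derivatives, and $\nabla\mathcal R_{IC1}$ needs third derivatives.
\end{itemize}
Summing over pieces reproduces \eqref{41EW}. The result is that the type (a) contribution is $E_T^2+O(M_{IC}^{-1/d}+M_{PDE}^{-1/(d+1)}+M_{BC}^{-1/d}+M_{IF}^{-1/d})$.

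\textbf{Step 3: the unsquared boundary and interface terms.} For a type (b) term, take square roots in the estimate of Step 2 and use $\sqrt{a+b}\le\sqrt a+\sqrt b$:
\[
\lVert\mathcal R_{BC}\rVert_{L^2(\Gamma_f\times[0,T])}\le \Bigl(Q^{BC}_{M_{BC}}\bigl[\lvert\mathcal R_{BC}\rvert^2\bigr]\Bigr)^{1/2}+C M_{BC}^{-1/(2d)}\le E_T(\theta,S)+CM_{BC}^{-1/(2d)}.
\]
The same argument gives the corresponding bounds for $\mathcal R_{IF1}$ and $\mathcal R_{IF2}$ with $M_{IF}^{-1/(2d)}$. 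This is the source of both the linear $E_T$ term and the halved exponents $M_{BC}^{-1/(2d)}$ and $M_{IF}^{-1/(2d)}$ in \eqref{eqdz}. The faster rates $M_{BC}^{-1/d}$ and $M_{IF}^{-1/d}$ from Step 2 are dominated by these for $M\ge1$.

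Collecting Steps 1--3 and multiplying by $T+T^2e^T$ gives \eqref{eqdz}.

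The main obstacle is Step 2. One must check that every constant $\lVert\lvert\mathcal R_\ast\rvert^2\rVert_{C^2}$ is uniform in $\theta\in\Theta_{L,W,R}$. This means tracking derivative orders carefully: up to fourth order for $\mathcal R_u$ and $\mathcal R_w$, and up to third order for $\nabla\mathcal R_{IC1}$. If an order-five derivative appears anywhere, for instance from differentiating $w_{\theta tt}$ twice in time-space mixes, I would first reduce the requirement by applying \eqref{quad_err} with a Lipschitz/$C^1$ variant on the boundary pieces. Failing that, I would accept $\sigma\in C^4$ only insofar as the paper's quadrature bound is applied as stated with a constant depending on $L,W,R$.
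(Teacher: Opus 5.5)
Your proposal is correct and follows essentially the same route as the paper. Both arguments reduce to $C_{\mathcal R}$ via Theorem~\ref{4.2S} and compare each residual integral with its midpoint quadrature using \eqref{quad_err}. Both take square roots for the unsquared boundary and interface terms, which produces the linear $E_T$ and the $M_{BC}^{-1/(2d)}$, $M_{IF}^{-1/(2d)}$ rates. Both control the quadrature constants uniformly over $\Theta_{L,W,R}$ through Lemma~\ref{lemC1} and Lemma~\ref{lemP}.

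Two small remarks. First, your explicit use of Lemma~\ref{lemC1} to bound the trace factors $\lVert\hat u\rVert_{L^2(0,T;H^2(\Omega_f))}$, $\lVert\hat w\rVert_{L^2(0,T;H^2(\Omega_s))}$ and $\lVert\hat w_t\rVert_{L^2(0,T;H^1(\Omega_s))}$, which are hidden in the constant $C$ of \eqref{aslm}, makes precise a point the paper leaves implicit. Second, the derivative count stops at order four, attained by $\lvert\mathcal R_u\rvert^2$ and $\lvert\mathcal R_w\rvert^2$ in $C^2$, so the contingency in your last paragraph is not needed.
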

\begin{remark}
As required in Theorem \ref{Th 3.3A}, one needs $C^4(\overline{\Omega}_f\times[0,T])$ and
$C^4(\overline{\Omega}_s\times[0,T])$ higher regularity of the solutions $u$ and $w,$ to the PDE system \eqref{1.0}, respectively in order obtain the error estimates of the PINN. In this regard, Theorem \ref{thm2.1} will be the key departure point for us to proceed to analyze those bounds in the proof of Theorem \ref{Th 3.3A} below. Indeed, since our PDE model \eqref{1.0} is the simplified version of the coupled PDE system in \cite{AG} (Stokes equation is replaced by the heat equation), the same regularity result given in Theorem \ref{thm2.1} will hold for the solution to our coupled system. In particular, if one combines the pseudodifferential operator argumentation of \cite{AG} with a bootstrapping argument for sufficiently smooth initial data  then the solution $\{u,w, w_t\}$ to \eqref{1.0} satisfies higher regularity $w \in L^\infty(0,T;H^k(\Omega_s))$ and $u \in L^2(0,T;H^k(\Omega_f))$  for $k\geq3.$ We should also note that this bootstrapping argument for our coupled system \eqref{1.0}, combined with the linear semigroup theory, will yield the following: if  
$\{w_0, w_{01},u_0\}\in D(\mathbf{A}^k)$ then the solution $\{w,w_t,u\}\in \cap^{k}_{j=0}~C^{k-j}([0,\infty);D(\mathbf{A}^j)])$ (see \cite[Theorem 4.4.7]{kesevan}.)  
\end{remark}

\noindent\textit{\textbf{Proof of Theorem \ref{Th 3.3A}}} \\ 

\noindent Our proof is based on the error estimate \eqref{22E} whose RHS is given via the norms of residuals introduced in \eqref{resi}-\eqref{resIC}. In particular, we will estimate the each residual term present in the constant $C_{\mathcal{R}}$ by the training error and the corresponding number of the quadrature points. We give our proof in three parts. 

\medskip

\noindent
\textbf{\underline{Part I:}}
By the quadrature error estimate~\eqref{quad_err} we have that for any $g \in C^2(\Lambda)$,
\begin{align}
\int_\Lambda g
\le
Q_M^\Lambda[g]
+
C_\Lambda\lVert g\rVert_{C^2(\Lambda)}M^{-1/d}.
\label{GHD}
\end{align}
Hence, we first apply \eqref{GHD} to the  first term ($\mathcal R_{IC1}$) on the right hand side of \eqref{aslm} by setting
$$
g(x)
:=
\lvert\mathcal R_{IC1}(x)\rvert^2
+
\lvert\nabla \mathcal R_{IC1}(x)\rvert^2,
\qquad x\in\Omega_s,
$$
which implies
\begin{align}
\lVert\mathcal R_{IC1}\rVert_{H^1(\Omega_s)}^2
\le
Q_{M_{IC}}^{IC}
[
\lvert\mathcal R_{IC1}\rvert^2
+
\lvert\nabla \mathcal R_{IC1}\rvert^2
]
+
C_{IC1}M_{IC}^{-1/d}.
\label{56HT}
\end{align}
Using the same argument, for the second and third terms
($\mathcal R_{IC2}$ and $\mathcal R_{IC3}$)
on the right-hand side of \eqref{aslm},  we take
\begin{align}
\lVert\mathcal R_{IC2}\rVert_{L^2(\Omega_s)}^2
&=
\int_{\Omega_s}
\lvert\mathcal R_{IC2}\rvert^2\,dx
\nonumber\\
&\le
Q_{M_{IC}}^{IC}
[\lvert\mathcal R_{IC2}\rvert^2]
+
C_{IC2}M_{IC}^{-1/d},
\label{51s}
\\
\lVert\mathcal R_{IC3}\rVert_{L^2(\Omega_f)}^2
&=
\int_{\Omega_f}
\lvert\mathcal R_{IC3}\rvert^2\,dx
\nonumber\\
&\le
Q_{M_{IC}}^{IC}
[\lvert\mathcal R_{IC3}\rvert^2]
+
C_{IC3}M_{IC}^{-1/d}.
\label{52s}
\end{align}
Adding \eqref{56HT}, \eqref{51s}, and \eqref{52s}, we obtain
\begin{align}
&
\lVert\mathcal R_{IC1}\rVert_{H^1(\Omega_s)}^2
+
\lVert\mathcal R_{IC2}\rVert_{L^2(\Omega_s)}^2
+
\lVert\mathcal R_{IC3}\rVert_{L^2(\Omega_f)}^2
\nonumber\\
&\le
Q_{M_{IC}}^{IC}
[
\lvert\mathcal R_{IC1}\rvert^2
+
\lvert\nabla\mathcal R_{IC1}\rvert^2
+
\lvert\mathcal R_{IC2}\rvert^2
+
\lvert\mathcal R_{IC3}\rvert^2
]
\nonumber\\
&\quad
+
\Big(
C_{IC1}+C_{IC2}
\Big)M_{IC}^{-1/d}
+
C_{IC3}M_{IC}^{-1/d}.
\label{53B}
\end{align}
Setting
$
C_{IC,s}:=C_{IC1}+C_{IC2},
\quad
C_{IC,f}:=C_{IC3},
$
and recalling the definition of the initial training error 
$$
E_{IC,T}(\theta,S_{IC})^2
:=
Q_{M_{IC}}^{IC}
[
\lvert\mathcal R_{IC1}\rvert^2
+
\lvert\nabla\mathcal R_{IC1}\rvert^2
+
\lvert\mathcal R_{IC2}\rvert^2
+
\lvert\mathcal R_{IC3}\rvert^2
]
$$
(see also \eqref{ET1}), we have in   \eqref{53B} that
\begin{align}
&
\lVert\mathcal R_{IC1}\rVert_{H^1(\Omega_s)}^2
+
\lVert\mathcal R_{IC2}\rVert_{L^2(\Omega_s)}^2
+
\lVert\mathcal R_{IC3}\rVert_{L^2(\Omega_f)}^2
\nonumber\\
&\le
E_{IC,T}(\theta,S_{IC})^2
+
C_{IC,s}M_{IC}^{-1/d}
+
C_{IC,f}M_{IC}^{-1/d}.
\label{53E}
\end{align}
For the fourth and fifth terms ($\mathcal{R}_u$ and $\mathcal{R}_w$) on the right-hand side of \eqref{aslm}, using the same argument, we obtain
\begin{align}
\lVert\mathcal{R}_{u}\rVert_{L^2(\Omega_f\times[0,T])}^{2}
&=
\int_{\Omega_f\times[0,T]}\lvert\mathcal R_u\rvert^2\,dxdt \le
Q_{M_{PDE}}^{PDE}[\lvert\mathcal R_u\rvert^2]
+
C_uM_{PDE}^{-1/(d+1)},
\label{09s}
\\
\lVert\mathcal{R}_{w}\rVert_{L^2(\Omega_s\times[0,T])}^{2}
&=
\int_{\Omega_s\times[0,T]}\lvert\mathcal R_w\rvert^2\,dxdt\le
Q_{M_{PDE}}^{PDE}[\lvert\mathcal R_w\rvert^2]
+
C_wM_{PDE}^{-1/(d+1)},
\label{0913}
\end{align}
where \(d+1\) is the dimension of the space-time domains
\(\Omega_f\times[0,T]\) and \(\Omega_s\times[0,T]\). Again, recalling the definition in \eqref{3F1},
$$
E_{PDE,T}(\theta,S_{PDE})^2
:=
Q_{M_{PDE}}^{PDE}[\lvert\mathcal R_u\rvert^2]
+
Q_{M_{PDE}}^{PDE}[\lvert\mathcal R_w\rvert^2],
$$
the sum of \eqref{09s} and \eqref{0913} gives
\begin{align}
\label{57E}
\lVert\mathcal{R}_{u}\rVert_{L^2(\Omega_f\times[0,T])}^{2}
+
\lVert\mathcal{R}_{w}\rVert_{L^2(\Omega_s\times[0,T])}^{2}
\le
E_{PDE,T}(\theta,S_{PDE})^2
+
C_{PDE}M_{PDE}^{-1/(d+1)}.
\end{align}
For sixth term on the right hand of \eqref{aslm}, applying the same argument, we have 
\begin{align}
\lVert\mathcal{R}_{BC}\rVert_{L^2(\Gamma_f\times[0,T])}^{2}
&=
\int_{\Gamma_f\times[0,T]}\lvert\mathcal R_{BC}\rvert^2\,dSdt
\le
Q_{M_{BC}}^{BC}[\lvert\mathcal R_{BC}\rvert^2]
+
C_{BC}M_{BC}^{-1/d}.
\label{58E}
\end{align}
Since, by \eqref{3F2},
$
E_{BC,T}(\theta,S_{BC})^2
:=
Q_{M_{BC}}^{BC}[\lvert\mathcal R_{BC}\rvert^2],
$
it follows from \eqref{58E} that
\begin{align}
\label{59E}
\lVert\mathcal{R}_{BC}\rVert_{L^2(\Gamma_f\times[0,T])}
\le
\Big(
E_{BC,T}(\theta,S_{BC})^2
+
C_{BC}M_{BC}^{-1/d}
\Big)^{1/2}.
\end{align}
For the remaining terms in \eqref{aslm}, the same argument yields that
\begin{align}
\lVert\mathcal{R}_{IF1}\rVert_{L^2(\Gamma_s\times[0,T])}
&\le
\Big(
Q_{M_{IF}}^{IF}[\lvert\mathcal R_{IF1}\rvert^2]
+
C_{IF1}M_{IF}^{-1/d}
\Big)^{1/2},
\label{60E1}
\\
\lVert\mathcal{R}_{IF2}\rVert_{L^2(\Gamma_s\times[0,T])}
&\le
\Big(
Q_{M_{IF}}^{IF}[\lvert\mathcal R_{IF2}\rvert^2]
+
C_{IF2}M_{IF}^{-1/d}
\Big)^{1/2}.
\label{61E1}
\end{align}
Since \eqref{3F3} gives
\begin{align}
Q_{M_{IF}}^{IF}[\lvert\mathcal R_{IF1}\rvert^2],
\,
Q_{M_{IF}}^{IF}[\lvert\mathcal R_{IF2}\rvert^2]
\le
E_{IF,T}(\theta,S_{IF})^2,
\label{75RT}
\end{align}
it follows from \eqref{60E1}-\eqref{61E1} that
\begin{align}
\lVert\mathcal{R}_{IF1}\rVert_{L^2(\Gamma_s\times[0,T])}
&\le
\Big(
E_{IF,T}(\theta,S_{IF})^2
+
C_{IF1}M_{IF}^{-1/d}
\Big)^{1/2},
\label{60E2}
\\
\lVert\mathcal{R}_{IF2}\rVert_{L^2(\Gamma_s\times[0,T])}
&\le
\Big(
E_{IF,T}(\theta,S_{IF})^2
+
C_{IF2}M_{IF}^{-1/d}
\Big)^{1/2}.
\label{61E2}
\end{align}
Setting
$
C_{IF}:=C_{IF1}+C_{IF2},
$
and using \eqref{75RT}, it follows from \eqref{60E2}and \eqref{61E2} that
\begin{align}
\label{63E}
\lVert\mathcal{R}_{IF1}\rVert_{L^2(\Gamma_s\times[0,T])}^{2}
+
\lVert\mathcal{R}_{IF2}\rVert_{L^2(\Gamma_s\times[0,T])}^{2}
\le
E_{IF,T}(\theta,S_{IF})^2
+
C_{IF}M_{IF}^{-1/d}.
\end{align}
Taking into account
\eqref{53E}, \eqref{57E}, \eqref{59E},        \eqref{60E2},\eqref{61E2}, and \eqref{63E} in \eqref{aslm}, we get
\begin{align}
&C_{\mathcal{R}}=\Big(\lVert\mathcal{R}_{IC1}\rVert_{H^1(\Omega_s)}^2
+\lVert\mathcal{R}_{IC2}\rVert_{L^2(\Omega_s)}^2
+\lVert\mathcal{R}_{IC3}\rVert_{L^2(\Omega_f)}^2
+\lVert\mathcal{R}_{u}\rVert_{L^2(\Omega_f\times[0,T])}^{2}
+\lVert\mathcal{R}_{w}\rVert_{L^2(\Omega_s\times[0,T])}^{2}
\nonumber\\
&
+C\lVert\mathcal{R}_{BC}\rVert_{L^2(\Gamma_f\times[0,T])}
+C\lVert\mathcal{R}_{IF1}\rVert_{L^2(\Gamma_s\times[0,T])}
+C\lVert\mathcal{R}_{IF2}\rVert_{L^2(\Gamma_s\times[0,T])}
\nonumber\\
&
+\lVert\mathcal{R}_{IF1}\rVert_{L^2(\Gamma_s\times[0,T])}^{2}
+\lVert\mathcal{R}_{IF2}\rVert_{L^2(\Gamma_s\times[0,T])}^{2}\Big)
\nonumber\\
& \hspace{48mm}\leq \Big(
E_{IC,T}(\theta,S_{IC})^2
+
C_{IC,s}M_{IC}^{-1/d}
+
C_{IC,f}M_{IC}^{-1/d}
\nonumber\\
&\hspace{48mm}+
E_{PDE,T}(\theta,S_{PDE})^2
+
C_{PDE}M_{PDE}^{-1/(d+1)}
\nonumber\\
&\hspace{48mm}+C
\Big(
E_{BC,T}(\theta,S_{BC})^2
+
C_{BC}M_{BC}^{-1/d}
\Big)^{1/2}
\nonumber\\
&\hspace{48mm}+
C\Big(
E_{IF,T}(\theta,S_{IF})^2
+
C_{IF}M_{IF}^{-1/d}
\Big)^{1/2}
+
E_{IF,T}(\theta,S_{IF})^2
+
C_{IF}M_{IF}^{-1/d}\Big)
\nonumber\\
&\hspace{48mm}=\bar C
\label{dfo}
\end{align}
Hence, considering \eqref{dfo} in \eqref{22E}, we obtain
\begin{align}
\lVert\hat u\rVert_{L^2((0,T)\times\Omega_f)}^2
+\lVert\partial_t\hat w\rVert_{L^2((0,T)\times\Omega_s)}^2
+\lVert\hat w\rVert_{L^2(0,T;H^1(\Omega_s))}^2
\le
\bar C\bigl(T+T^2e^T\bigr). \label{main}
\end{align}
\medskip

\noindent \textbf{\noindent \underline{Part II.}} Having obtained a bound for the residuals in \textbf{Part I} \eqref{dfo} in terms of the training error functions, we now turn our attention to the constants
$C_{PDE},C_{BC},C_{IF},C_{IC,s}$ and $C_{IC,f}$ in
\eqref{dfo}. Our goal in this part is to estimate those constants in terms of the neural network parameters
$L$, $W$, $R$, and the activation function $\sigma$, which is defined in
Definition~\ref{NN}.
\medskip

\noindent First, for $n\in\mathbb{N}$, Lemma~\ref{lemC1} gives the following
estimates for the PINN approximations $u_\theta$ and $w_\theta$:
\begin{align}
\label{eq3.33}
&\lVert {(u_{\theta})}_j \rVert_{C^n}
\le
16^L (d+1)^{2n}
(
e^{2} n^4 W^3 R^n \lVert \sigma\rVert_{C^n}
)^{nL},\\
&\lVert {(w_{\theta})}_j \rVert_{C^n}
\le
16^L (d+1)^{2n}
(
e^{2} n^4 W^3 R^n \lVert \sigma\rVert_{C^n}
)^{nL},
\label{105K}
\end{align}
for all $j$.
Now, we apply Lemma~\ref{lemP} in Appendix to the residual terms as follows:
\medskip

\noindent
Using the PDE residuals in $\eqref{1000}_{1}$ and $\eqref{1000}_{2}$, we have
\begin{align}
\mathcal R_u
&=\hat u_t-\Delta\hat u
=\partial_tu_\theta-\Delta u_\theta
-\bigl(u_t-\Delta u\bigr)
=\partial_tu_\theta-\Delta u_\theta
\label{30P},\\
\mathcal R_w
&=\hat w_{tt}-\Delta\hat w+\hat w
=\partial_{tt}w_\theta-\Delta w_\theta+w_\theta
-\bigl(w_{tt}-\Delta w+w\bigr)
\nonumber\\
&=\partial_{tt}w_\theta-\Delta w_\theta+w_\theta.
\label{30B}
\end{align}
Hence, by Lemma \eqref{lemP}, \eqref{quad_err}, \eqref{eq3.33}, \eqref{105K}, \eqref{30P}, and \eqref{30B}, we get
\begin{align}
C_{PDE}
&\lesssim
\lVert \mathcal R_u^2\rVert_{C^2((0,T)\times\Omega_f)}
+
\lVert \mathcal R_w^2\rVert_{C^2((0,T)\times\Omega_s)}
\nonumber\\
&\lesssim
\lVert u_\theta\rVert_{C^4(\overline{\Omega}_f\times[0,T])}^2
+
\lVert w_\theta\rVert_{C^4(\overline{\Omega}_s\times[0,T])}^2
\nonumber\\
&\lesssim
16^L(d+1)^8
\left(
e^2 4^4W^3R^4\lVert\sigma\rVert_{C^4}
\right)^{4L}.
\label{CE0}
\end{align}
Since the
boundary residual  
$
\mathcal R_{BC}
=\hat u|_{\Gamma_f}
=(u_\theta-u)|_{\Gamma_f}
=u_\theta|_{\Gamma_f}
$ in $\eqref{1000}_{3}$,
we have that
$$
\lVert\mathcal R_{BC}\rVert_{C^2((0,T)\times\Gamma_f)}
\lesssim
\lVert u_\theta\rVert_{C^2(\overline{\Omega}_f\times[0,T])}
$$
and hence, combining this estimate  with \eqref{quad_err}, \eqref{eq3.33}, and Lemma \eqref{lemP}, we get
\begin{align}
C_{BC}
&\lesssim
\lVert\mathcal R_{BC}^2\rVert_{C^2((0,T)\times\Gamma_f)}
\nonumber\\
&\lesssim
16^L(d+1)^4
\left(
e^2 2^4W^3R^2\lVert\sigma\rVert_{C^2}
\right)^{2L}.
\label{CBC}
\end{align}
Recalling from $\eqref{1000}_{4}$ and $\eqref{1000}_{5}$ that the interface residuals are given by
\begin{equation}
\begin{aligned}
&\mathcal R_{IF1}=\hat u-\hat w_t
=(u_\theta-w_{\theta,t})-(u-w_t)
=u_\theta-w_{\theta,t},
\nonumber\\
&\mathcal R_{IF2}
=\frac{\partial\hat u}{\partial\nu}
-\frac{\partial\hat w}{\partial\nu}
=
\left(
\frac{\partial u_\theta}{\partial\nu}
-\frac{\partial w_\theta}{\partial\nu}
\right)
-
\left(
\frac{\partial u}{\partial\nu}
-\frac{\partial w}{\partial\nu}
\right)=
\frac{\partial u_\theta}{\partial\nu}
-\frac{\partial w_\theta}{\partial\nu},
\end{aligned}
\end{equation}
 and applying the triangle inequality,  we also have 
\begin{align}
\lVert\mathcal R_{IF1}\rVert_{C^2((0,T)\times\Gamma_s)}
&\lesssim
\lVert u_\theta\rVert_{C^2(\overline{\Omega}_f\times[0,T])}
+
\lVert w_\theta\rVert_{C^3(\overline{\Omega}_s\times[0,T])},
\label{81B}
\\
\lVert\mathcal R_{IF2}\rVert_{C^2((0,T)\times\Gamma_s)}
&\lesssim
\lVert u_\theta\rVert_{C^3(\overline{\Omega}_f\times[0,T])}
+
\lVert w_\theta\rVert_{C^3(\overline{\Omega}_s\times[0,T])}.
\label{34C}
\end{align}
Setting $C_{IF}=C_{IF1}+C_{IF2}$ and using
\eqref{eq3.33}, \eqref{105K}, \eqref{81B}, and \eqref{34C}, we take
\begin{align}
C_{IF}
&\lesssim
\lVert\mathcal R_{IF1}^2\rVert_{C^2((0,T)\times\Gamma_s)}
+
\lVert\mathcal R_{IF2}^2\rVert_{C^2((0,T)\times\Gamma_s)}
\nonumber\\
&\lesssim
\lVert u_\theta\rVert_{C^3(\overline{\Omega}_f\times[0,T])}^2
+
\lVert w_\theta\rVert_{C^3(\overline{\Omega}_s\times[0,T])}^2
\nonumber\\
&\lesssim
16^L(d+1)^6
\left(
e^2 3^4W^3R^3\lVert\sigma\rVert_{C^3}
\right)^{3L}.
\label{CIF}
\end{align}
Finally, using the initial residuals given in $\eqref{1000}_{6}$
and applying the triangle inequality, we obtain
\begin{align}
\lVert\mathcal R_{IC1}\rVert_{C^3(\Omega_s)}
&\lesssim
\lVert w_\theta\rVert_{C^3(\overline{\Omega}_s\times[0,T])}
+
\lVert w\rVert_{C^3(\overline{\Omega}_s\times[0,T])},
\label{85A1}\\
\lVert\mathcal R_{IC2}\rVert_{C^2(\Omega_s)}
&\lesssim
\lVert w_\theta\rVert_{C^3(\overline{\Omega}_s\times[0,T])}
+
\lVert w\rVert_{C^3(\overline{\Omega}_s\times[0,T])},
\label{85A2}\\
\lVert\mathcal R_{IC3}\rVert_{C^2(\Omega_f)}
&\lesssim
\lVert u_\theta\rVert_{C^2(\overline{\Omega}_f\times[0,T])}
+
\lVert u\rVert_{C^2(\overline{\Omega}_f\times[0,T])}.
\label{36C}
\end{align}
Therefore, from \eqref{quad_err},  \eqref{105K}, Lemma \eqref{lemP}, \eqref{85A1}, and \eqref{85A2},
we obtain
\begin{align}
C_{IC,s}
&\lesssim
\left\lVert 
\lvert\mathcal R_{IC1}\rvert^2
+
\lvert\nabla\mathcal R_{IC1}\rvert^2
\right\rVert_{C^2(\Omega_s)}
+
\lVert\mathcal R_{IC2}^2\rVert_{C^2(\Omega_s)}
\nonumber\\
&\lesssim
\lVert\mathcal R_{IC1}\rVert_{C^3(\Omega_s)}^2
+
\lVert\mathcal R_{IC2}\rVert_{C^2(\Omega_s)}^2
\nonumber\\
&\lesssim
\lVert w_\theta\rVert_{C^3(\overline{\Omega}_s\times[0,T])}^2
+
\lVert w\rVert_{C^3(\overline{\Omega}_s\times[0,T])}^2
\nonumber\\
&\lesssim
16^L(d+1)^6
\left(
e^2 3^4W^3R^3\lVert\sigma\rVert_{C^3}
\right)^{3L}
+
2\lVert w\rVert_{C^3(\overline{\Omega}_s\times[0,T])}^2.
\label{CICs}
\end{align}
Similarly, from \eqref{quad_err}, \eqref{eq3.33}, Lemma \eqref{lemP}, and \eqref{36C}, we have
\begin{align}
C_{IC,f}
&\lesssim
\lVert\mathcal R_{IC3}^2\rVert_{C^2(\Omega_f)}
\lesssim
\lVert u_\theta\rVert_{C^2(\overline{\Omega}_f\times[0,T])}^2
+
\lVert u\rVert_{C^2(\overline{\Omega}_f\times[0,T])}^2
\nonumber\\
&\lesssim
16^L(d+1)^4
\left(
e^2 2^4W^3R^2\lVert\sigma\rVert_{C^2}
\right)^{2L}
+
2\lVert u\rVert_{C^2(\overline{\Omega}_f\times[0,T])}^2.
\label{CICf}
\end{align}

\medskip
\noindent
\textbf{\underline{Part III:}}
In the last part of the proof, we recall the constant $\bar C$ in \eqref{dfo} and the estimates \eqref{CE0}, \eqref{CBC}, \eqref{CIF}, \eqref{CICs}, and \eqref{CICf}  obtained in \textbf{Part II} for the constants $C_{PDE},C_{BC},C_{IF},C_{IC,s}, \textrm{and}\, C_{IC,f}$. \\ \\
\noindent Since 
$
(a+b)^{1/2}\leq a^{1/2}+b^{1/2},
$
for any $a,b>0$, we have for the terms on the RHS of \eqref{dfo} that
\begin{equation}
\begin{aligned}
\Big(
E_{BC,T}(\theta,S_{BC})^2
+
C_{BC}M_{BC}^{-1/d}
\Big)^{1/2}
&\le
C\Big(
E_{BC,T}(\theta,S_{BC})
+
M_{BC}^{-1/(2d)}
\Big),
\\
\Big(
E_{IF,T}(\theta,S_{IF})^2
+
C_{IF}M_{IF}^{-1/d}
\Big)^{1/2}
&\le
C\Big(
E_{IF,T}(\theta,S_{IF})
+
M_{IF}^{-1/(2d)}
\Big).
\end{aligned}
\label{96RT}
\end{equation}
Also, since for every fixed $T>0~$, $(T+T^2e^T)$ is a constant, using
\eqref{96RT} in \eqref{dfo}, we get
\begin{align}
\bar C\big(T+T^2e^T\big)=
O\Big(
&E_{IC,T}(\theta,S_{IC})^2
+
E_{PDE,T}(\theta,S_{PDE})^2
+
E_{BC,T}(\theta,S_{BC})\nonumber\\
&+
E_{IF,T}(\theta,S_{IF})
+
E_{IF,T}(\theta,S_{IF})^2
+
M_{IC}^{-1/d}
+
M_{PDE}^{-1/(d+1)}\nonumber\\
&+
M_{BC}^{-1/(2d)}
+
M_{IF}^{-1/(2d)}
+
M_{IF}^{-1/d}
\Big).
\label{91S}
\end{align}
Finally, recalling the training error in \eqref{56FR} 
\begin{align*}
E_T(\theta,S)^2
&=
E_{PDE,T}(\theta,S_{\mathrm{PDE}})^2
+
E_{BC,T}(\theta,S_{BC})^2
+
E_{IF,T}(\theta,S_{IF})^2
+
E_{IC,T}(\theta,S_{IC})^2
\end{align*}
and observing that for $M_{IF}\ge1~$,
$
M_{IF}^{-1/d}\le M_{IF}^{-1/(2d)},
$ \eqref{91S} and \eqref{main} yields that 
\begin{align}
\bar C\big(T+T^2e^T\big)
=
O\Big(
E_T(\theta,S)
+
E_T(\theta,S)^2
+
M_{IC}^{-1/d}
+
M_{PDE}^{-1/(d+1)}
+
M_{BC}^{-1/(2d)}
+
M_{IF}^{-1/(2d)}
\Big),
\end{align}
which gives \eqref{eqdz} and completes the proof of Theorem~\ref{Th 3.3A}.

\medskip
 \section{Numerical Experiment}
\label{NE}
\noindent 
In this section we present numerical experiments validating 
the error estimates of Theorem~\ref{Th 3.3A} for the PINN 
approximation of the coupled heat-wave FSI system \eqref{1.0}.


\medskip
\noindent\textbf{Geometry and manufactured solution.}
For $r(x,y) = \sqrt{x^2+y^2}$ and consider the circle-in-circle 
configuration with $R_s = 0.5$ and $R_f = 1$:
$$
\Omega_s = \{(x,y)\in\mathbb{R}^2 : r < R_s\},
\qquad
\Omega_f = \{(x,y)\in\mathbb{R}^2 : R_s < r < R_f\},
$$
with interface $\Gamma_s := \{r = R_s\}$ and outer boundary 
$\Gamma_f := \{r = R_f\}$.

\noindent Set $\phi(r) := 1 - r^2$. We use the exact solutions $ 
\bigl((w_1,w_2),(w_{1t},w_{2t}),(u_1,u_2)\bigr)$
defined as follows:
\begin{align*}
u_1(x,y,t) &= e^t\phi(r),
&\qquad u_2(x,y,t) &= 2e^t\phi(r),\\
w_1(x,y,t) &= e^t\phi(r)+1,
&\qquad w_2(x,y,t) &= 2e^t\phi(r)+2,\\
 w_{1t}(x,y,t) &= e^t\phi(r),
&\qquad  w_{2t}(x,y,t) &= 2e^t\phi(r).
\end{align*}
The corresponding source terms are
$$
f_1=\bigl((5-r^2)e^t,\,(10-2r^2)e^t\bigr),
\qquad
f_2=\bigl((6-2r^2)e^t+1,\,(12-4r^2)e^t+2\bigr),
$$
for the fluid and solid equations, respectively.
\medskip

\noindent\textbf{Network architecture and training.}
A single fully-connected neural network 
$\varPhi_\theta:\mathbb{R}^3\to\mathbb{R}^4$
approximates the coupled fields, mapping $(x,y,t)$ to 
$((u_{\theta\,1},u_{\theta\,2}),(w_{\theta\,1},w_{\theta\,2}))$, while $(w_{1\,\theta\,t},w_{2\,\theta\,t})$ is obtained by 
automatic differentiation with respect to $t$. The architecture uses two hidden layers 
of width $W$ with $\tanh$ activation, giving $L=3$ in the notation of
Definition~\ref{NN}, and output dimension $l_L=2d=4$.
\medskip

\begin{figure}[h]
\centering
\begin{minipage}{0.48\linewidth}
    \centering
    \includegraphics[width=\linewidth]{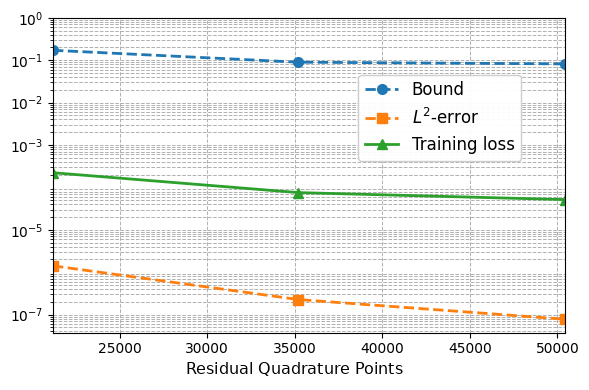}
\end{minipage}
\hfill
\begin{minipage}{0.48\linewidth}
    \centering
    \includegraphics[width=\linewidth]{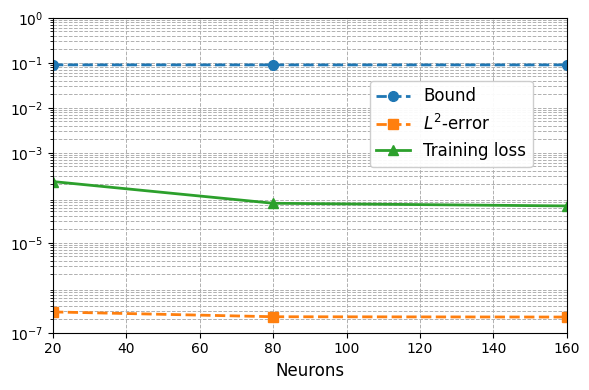}
\end{minipage}
\caption{Total $L^2$-error, training error $E_T(\theta,S)$, 
and theoretical bound \eqref{eqdz} as functions of 
(left) number of residual quadrature points $M$ with fixed 
width $W=80$, and (right) network width $W$ with fixed 
$M = 35{,}200$.}
\label{twoim}
\end{figure}

\begin{figure}[!h]
    \centering
    \begin{minipage}{0.33\linewidth}
        \centering
        Exact\\[2pt]
        \includegraphics[width=\linewidth, clip, trim = 0 0 0 5.1cm]{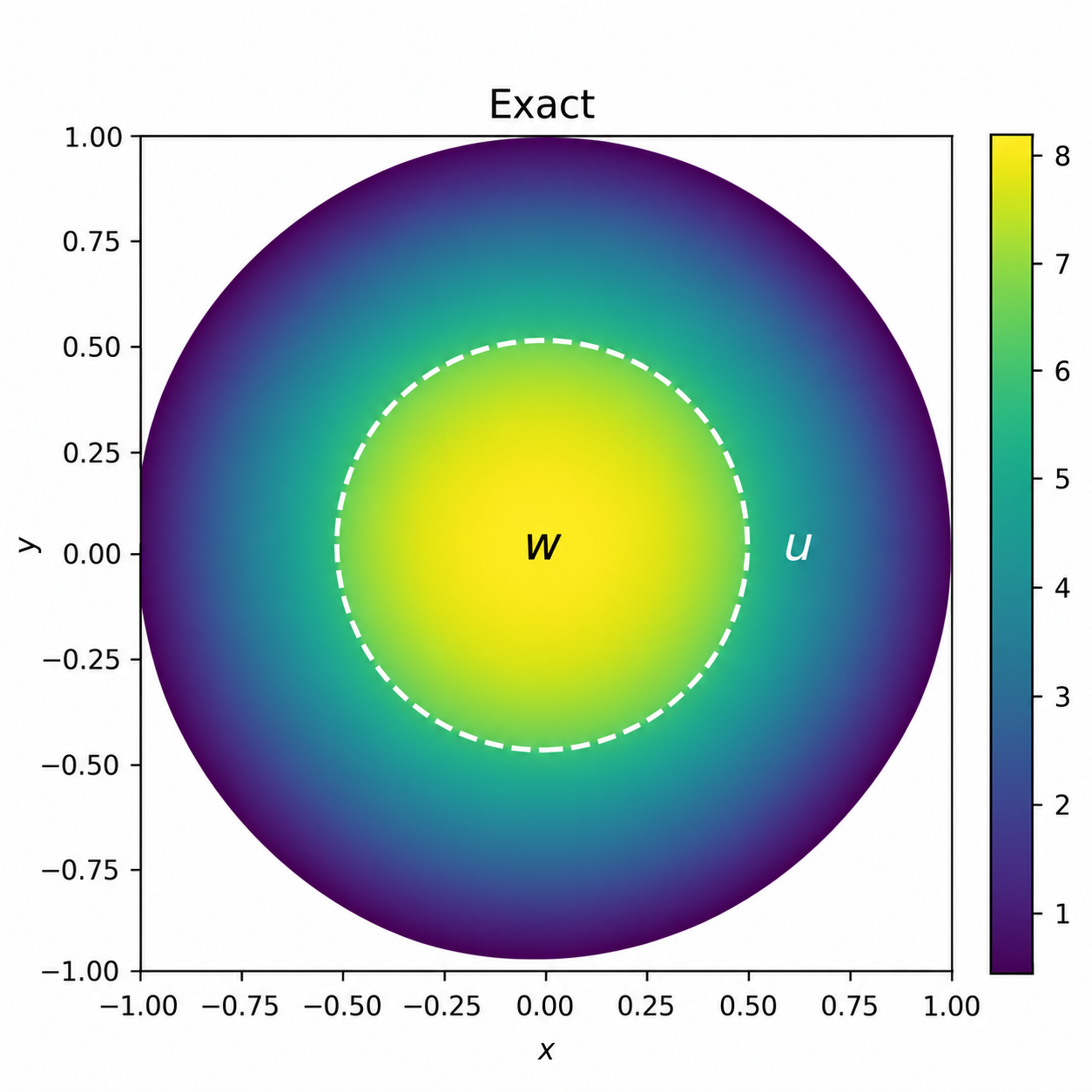}
    \end{minipage}%
    \hfill
    \begin{minipage}{0.33\linewidth}
        \centering
        PINN approximation\\[2pt]
        \includegraphics[width=\linewidth, clip, trim = 0 0 0 5cm]{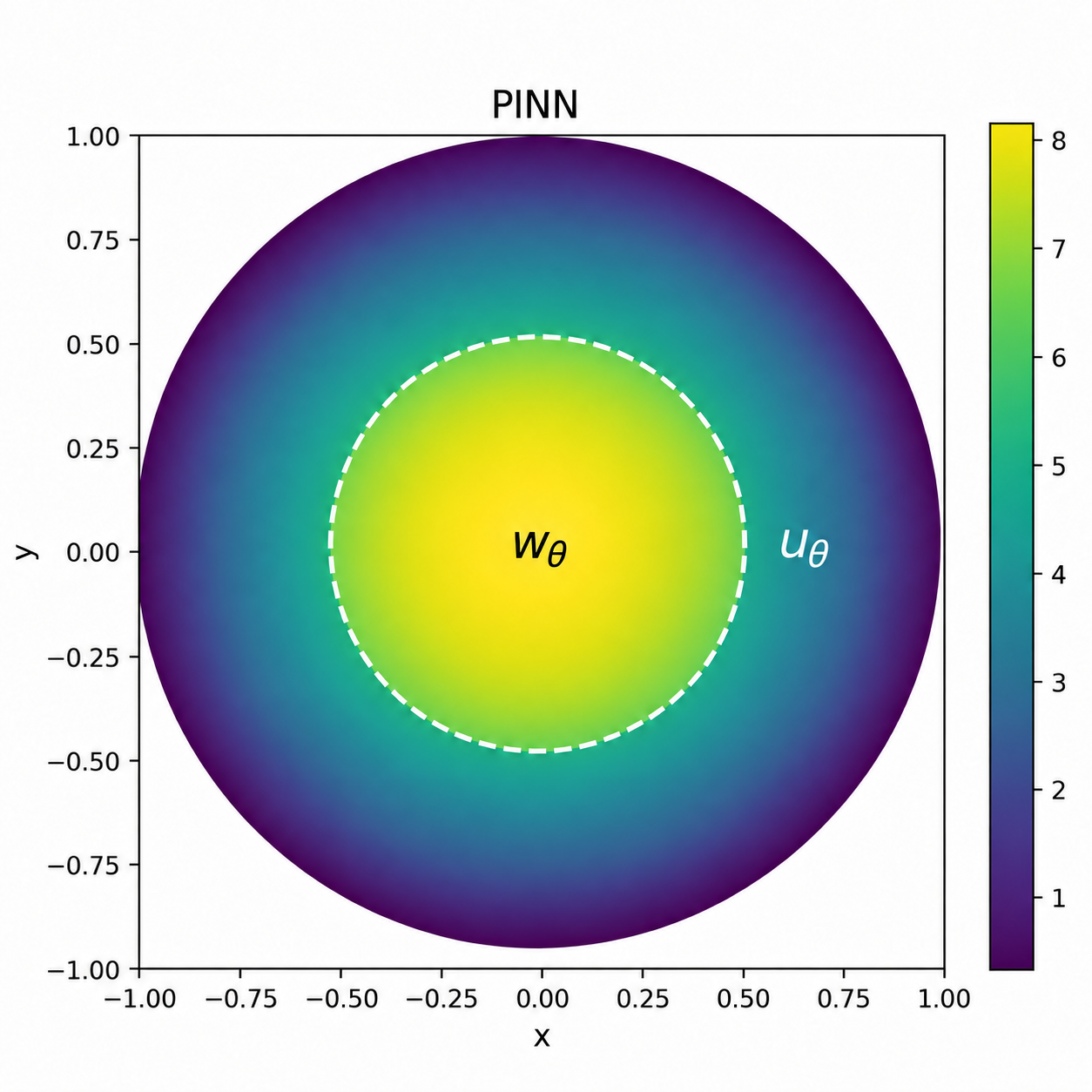}
    \end{minipage}%
    \hfill
    \begin{minipage}{0.33\linewidth}
        \centering
        Absolute Error\\[2pt]
        \includegraphics[width=\linewidth, clip, trim = 0 0 0 5.6cm]{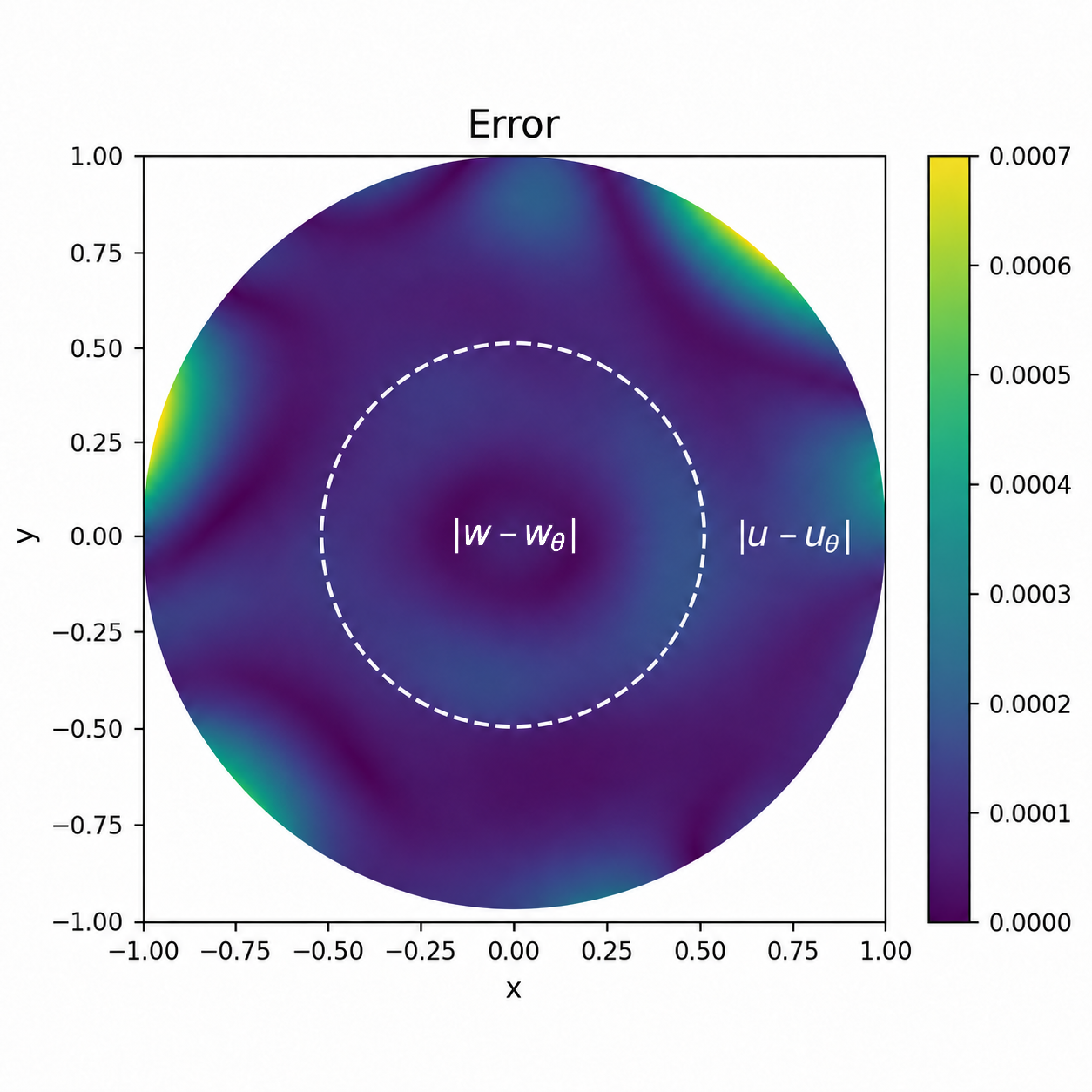}
    \end{minipage}
    \centering
    \begin{minipage}{0.33\linewidth}
        \centering
        \includegraphics[width=\linewidth, clip, trim = 0 0 0 5.1cm]{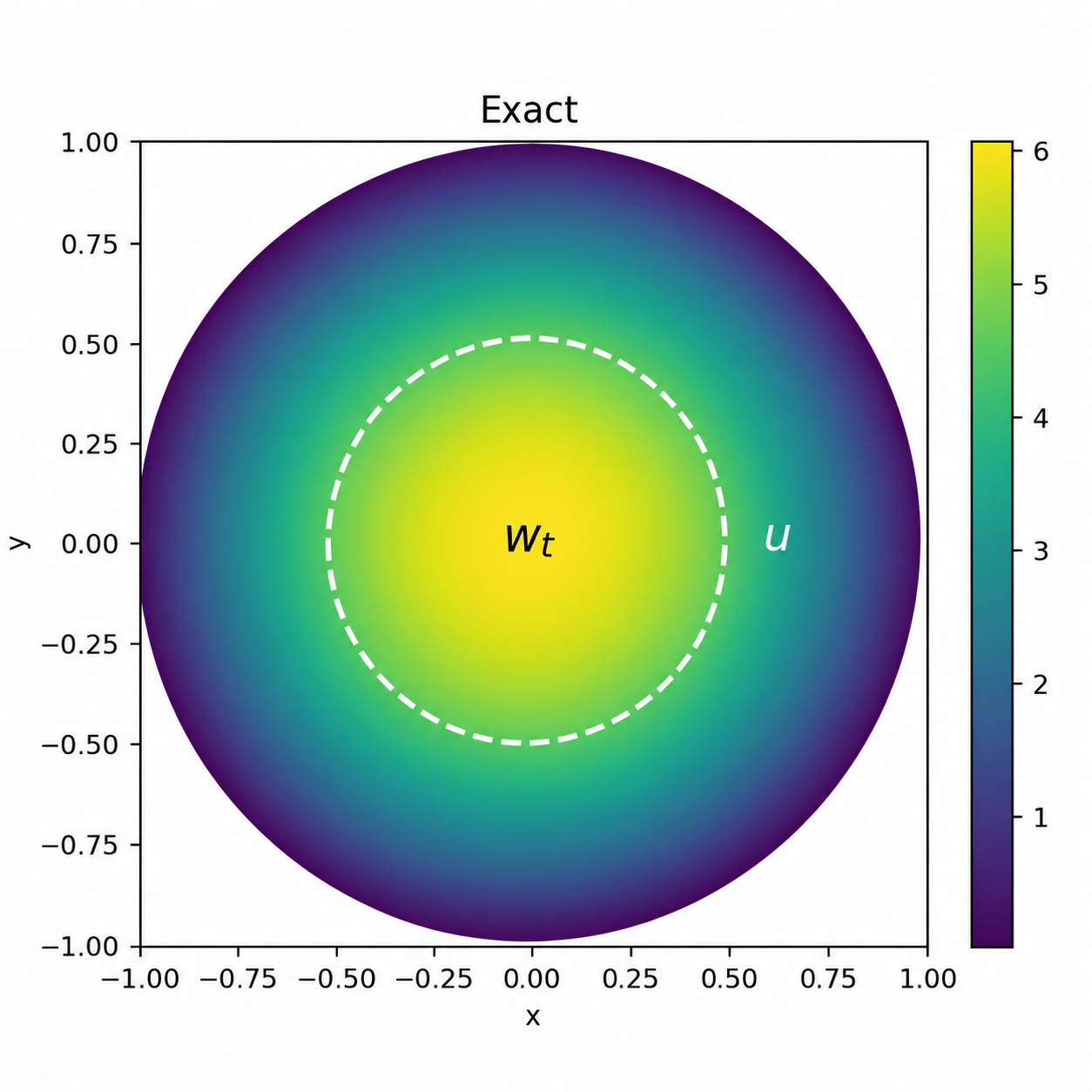}
    \end{minipage}%
    \hfill
    \begin{minipage}{0.33\linewidth}
        \centering
        \includegraphics[width=\linewidth, clip, trim = 0 0 0 5.2cm]{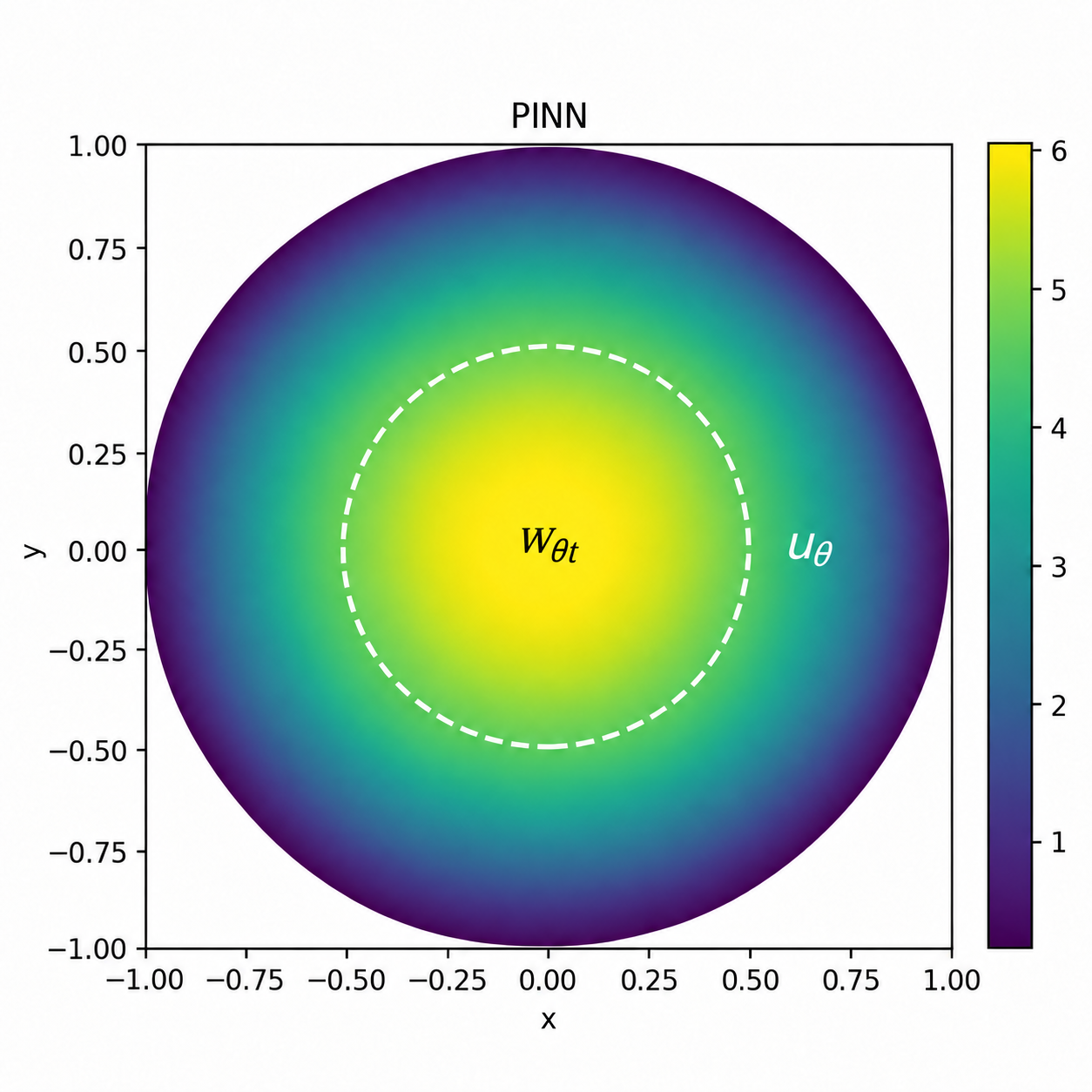}
    \end{minipage}%
    \hfill
    \begin{minipage}{0.33\linewidth}
        \centering
        \includegraphics[width=\linewidth, clip, trim = 0 0 0 5.8cm]{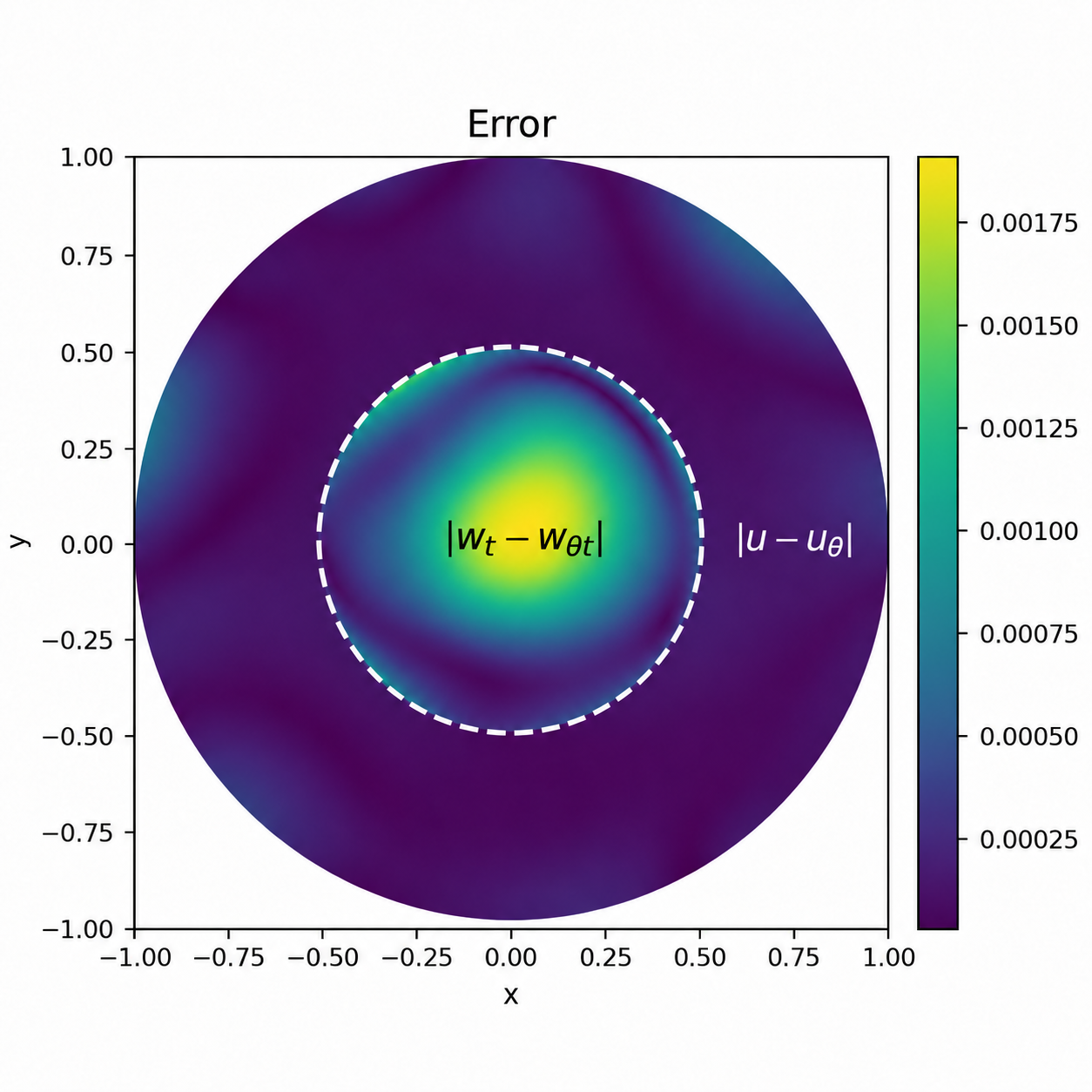}
    \end{minipage}
    \caption{Manufactured solution (left), PINN approximation (middle), and absolute pointwise error (right) at $t=1$, for $M=21{,}200$ quadrature points and $W=80$ neurons per hidden layer. Top row: $u$ and $w$; bottom row: $u$ and $w_t$.} 
    \label{fig 3}
\end{figure}
\medskip
\noindent 
Training is performed in two stages. In the first stage, the Adam optimizer is run for $20{,}000$ epochs with an initial learning rate of $5\times10^{-4}$. The learning rate is reduced by half when the training loss plateaus, to a minimum of $6.25\times10^{-5}$; and gradient clipping with threshold $1.0$ is employed throughout. In the second stage, the L-BFGS optimizer is run for six cycles of $1{,}000$ iterations each to further reduce the PDE, boundary, and interface residuals.
\medskip

\noindent\textbf{Experiment 1: Effect of quadrature points.}
We fix the network width at $W = 80$ and vary the number of 
residual quadrature points $M \in \{21{,}200,\,35{,}200,\,50{,}450\}$.  According to the estimate \eqref{eqdz}, the bound on the total 
error contains the quadrature-error terms
$C_{IC}M_{IC}^{-1/d}$,
$C_{\mathrm{PDE}}M_{\mathrm{PDE}}^{-1/(d+1)}$,
$C_{BC}M_{BC}^{-1/2d}$, and
$C_{IF}M_{IF}^{-1/d}$, which decrease as the corresponding
numbers of quadrature points increase.

The left panel of 
Figure~\ref{twoim} reports the total error 
$\displaystyle \lVert (w, w_t, u) - (w_{\theta},w_{\theta t}, u_{\theta})\rVert_{L^2},$ the training error 
$E_T(\theta,S)$, and the theoretical bound \eqref{eqdz} for 
each value of $M$. The results confirm that increasing the 
number of quadrature points reduces both the training loss 
and the $L^2$-error, with the most significant reduction 
occurring at the initial increase from $21{,}200$ to 
$35{,}200$ points. The theoretical bound decreases consistently with 
$M$, in agreement with \eqref{eqdz}.
\medskip

\noindent
Figure~\ref{fig 3} displays the manufactured solution, PINN
approximation, and pointwise absolute error for $(w,w_t,u)$ based on
training with $M=21{,}200$ residual quadrature points.

\medskip
\noindent\textbf{Experiment 2: Effect of network width.}
We fix $M = 35{,}200$ quadrature points and two hidden layers, and vary the width $W \in \{20,\,80,\,160\}$. In \eqref{eqdz}, the width enters only through the constants $C_{\mathrm{PDE}}$, $C_{BC}$, $C_{IF}$, and $C_{IC}$, which by Lemma~\ref{lemC1} grow with $W$; these are worst-case bounds over the whole parameter set $\Theta_{L,W,R}$ and need not be sharp for a trained network. At fixed $M$, the quadrature terms in \eqref{eqdz} do not depend on $W$, so a wider network can improve the estimate only through the training error. The right panel of Figure~\ref{twoim} reports the total error, the training loss, and the bound for each width; the plotted bound is the right-hand side of \eqref{eqdz}. The training loss decreases from $W=20$ to $W=80$ and changes little between $W=80$ and $W=160$, and the $L^2$-error varies only slightly across the three widths. The left panel of Figure~\ref{twoim}, the measured error stays below the plotted bound in every run, decreases as $M$ grows, and is insensitive to $W$ at fixed $M$, consistent with the dependence on $M$ and on the training error in \eqref{eqdz}. 

\section{Appendix}
\vspace{0.5cm}
\noindent We recall the following lemma for continuously differentiable functions:
\begin{lemma}
\label{lemP}
Let $n\in\mathbb N_0$, and
$\mathcal D\subset \mathbb{R}^d,~~d=2,3.$ If
$g\in C^n(\mathcal D)$, then
\begin{align}
\lVert g^2\rVert_{C^n(\mathcal D)}
\le
2^n\lVert g\rVert_{C^n(\mathcal D)}^2.
\label{Rp1}
\end{align}
\end{lemma}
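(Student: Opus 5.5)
The plan is to prove the estimate with the multivariate Leibniz rule, using the convention (implicit in Lemma~\ref{lemC1}) that
\[
\lVert g\rVert_{C^n(\mathcal D)}=\max_{\lvert\alpha\rvert\le n}\ \sup_{x\in\mathcal D}\lvert D^\alpha g(x)\rvert .
\]
With this normalization each individual derivative of order at most $n$ is bounded by $\lVert g\rVert_{C^n(\mathcal D)}$. It then suffices to bound $\sup_{\mathcal D}\lvert D^\alpha(g^2)\rvert$ for every fixed multi-index $\alpha$ with $\lvert\alpha\rvert\le n$, and take the maximum over $\alpha$.

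Fix such an $\alpha$. Since $g\in C^n(\mathcal D)$, the Leibniz formula gives
\[
D^\alpha(g^2)=\sum_{\beta\le\alpha}\binom{\alpha}{\beta}\,D^\beta g\;D^{\alpha-\beta}g,
\qquad
\binom{\alpha}{\beta}=\prod_{i=1}^d\binom{\alpha_i}{\beta_i}.
\]
Both $\lvert\beta\rvert\le n$ and $\lvert\alpha-\beta\rvert\le n$, so each product is bounded pointwise by $\lVert g\rVert_{C^n(\mathcal D)}^2$. This yields
\[
\sup_{\mathcal D}\lvert D^\alpha(g^2)\rvert\le\lVert g\rVert_{C^n(\mathcal D)}^2\sum_{\beta\le\alpha}\binom{\alpha}{\beta}.
\]

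The only computation is the combinatorial sum. It factorizes coordinatewise:
\[
\sum_{\beta\le\alpha}\binom{\alpha}{\beta}=\prod_{i=1}^d\sum_{\beta_i=0}^{\alpha_i}\binom{\alpha_i}{\beta_i}=\prod_{i=1}^d 2^{\alpha_i}=2^{\lvert\alpha\rvert}\le 2^n .
\]
Taking the maximum over $\lvert\alpha\rvert\le n$ gives \eqref{Rp1}. The case $n=0$ reduces to $\sup\lvert g^2\rvert=(\sup\lvert g\rvert)^2$ and is trivial.

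There is no real obstacle here. The one point needing care is the choice of norm. If $\lVert\cdot\rVert_{C^n}$ were defined as a sum over all multi-indices rather than a maximum, the constant would change. So I will state the max-convention explicitly, matching how $\lVert\varPhi_\theta\rVert_{C^n}$ is used in Lemma~\ref{lemC1}.
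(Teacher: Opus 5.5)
Your proof is correct and follows the paper's argument essentially line for line: the Leibniz rule, bounding each product $D^\beta g\,D^{\alpha-\beta}g$ by $\lVert g\rVert_{C^n(\mathcal D)}^2$, the coordinatewise factorization $\sum_{\beta\le\alpha}\binom{\alpha}{\beta}=2^{\lvert\alpha\rvert}\le 2^n$, and a maximum over $\lvert\alpha\rvert\le n$, with the same max-type norm that the paper uses implicitly. Your aside about the sum-type norm is harmless but not quite right: since $\binom{\beta+\gamma}{\beta}\le 2^{\lvert\beta+\gamma\rvert}\le 2^n$, reindexing the double sum gives the same constant $2^n$ for that convention too.
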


\begin{proof}
For any multi-index
$\alpha\in\mathbb N_0^d$ with $\lvert\alpha\rvert\le n$, the
higher-order product rule gives
\begin{align}
D^\alpha(g^2)
=
\sum_{\beta\le\alpha}
\binom{\alpha}{\beta}
D^\beta g\,D^{\alpha-\beta}g,
\qquad
\binom{\alpha}{\beta}
=
\prod_{i=1}^d\binom{\alpha_i}{\beta_i}.
\label{Rp4}
\end{align}
Since $\lvert\beta\rvert\le n$ and
$\lvert\alpha-\beta\rvert\le n$, taking the $L^\infty(\mathcal D)$-norm and using the triangle inequality and the definition of the $C^n(\mathcal D)$-norm, it follows from \eqref{Rp4} that
\begin{align}
\lVert D^\alpha(g^2)\rVert_{L^\infty(\mathcal D)}
&\le
\sum_{\beta\le\alpha}
\binom{\alpha}{\beta}
\lVert D^\beta g\rVert_{L^\infty(\mathcal D)}
\lVert D^{\alpha-\beta}g\rVert_{L^\infty(\mathcal D)}
\nonumber\\
&\le
\left(
\sum_{\beta\le\alpha}
\binom{\alpha}{\beta}
\right)
\lVert g\rVert_{C^n(\mathcal D)}^2.
\label{Rp6}
\end{align}
Using the binomial theorem and $\lvert\alpha\rvert\le n$, we obtain
\begin{align}
\sum_{\beta\le\alpha}
\binom{\alpha}{\beta}
&=
\prod_{i=1}^d
\left(
\sum_{\beta_i=0}^{\alpha_i}
\binom{\alpha_i}{\beta_i}
\right)
\nonumber\\
&=
\prod_{i=1}^d 2^{\alpha_i}
=
2^{\lvert\alpha\rvert}
\le
2^n.
\label{Rp11}
\end{align}
Thus, by \eqref{Rp6} and \eqref{Rp11}, we obtain
\begin{align}
\lVert D^\alpha(g^2)\rVert_{L^\infty(\mathcal D)}
\le
2^n\lVert g\rVert_{C^n(\mathcal D)}^2.
\label{Rp12}
\end{align}
Taking the maximum over $\lvert\alpha\rvert\le n$ in
\eqref{Rp12} completes the proof of Lemma~\ref{lemP}.
\medskip

\noindent In order to obtain necessary bounds on the derivatives of the neural network, we need the use of the following lemma:
\begin{lemma}
\label{x2}
\cite[Lemma A.7]{DRyck}
Let $d,m,n\in\mathbb N$, $\Omega_1\subset\mathbb R^d$,
$\Omega_2\subset\mathbb R^m$, $f\in C^n(\Omega_1;\Omega_2)$, and
$g\in C^n(\Omega_2;\mathbb R)$. Then
\begin{align*}
\lVert g\circ f\rVert_{W^{n,\infty}(\Omega_1)}
\le
16\bigl(e^2n^4md^2\bigr)^n
\lVert g\rVert_{W^{n,\infty}(\Omega_2)}
\max_{1\le i\le m}
\lVert(f)_i\rVert_{W^{n,\infty}(\Omega_1)}^n.
\end{align*}
\end{lemma}
\end{proof}

\section{Conflict of Interest}
The authors have  no conflicts of interest to disclose.

\section*{Acknowledgment}
\noindent The author Pelin G. Geredeli would like to thank the National Science Foundation, and acknowledge her partial funding from NSF Grants NSF-DMS-2348312 and NSF-DMS-2606020.
\medskip

\noindent The authors would like to thank Clemson University for providing access to the Palmetto High Performance Computing cluster and the associated computational support.

\bigskip

\end{document}